\numberwithin{equation}{section}
\newtheorem{Theorem}{Theorem}[section]
\newtheorem*{Theorem*}{Theorem}
\newtheorem{Corollary}[Theorem]{Corollary}
\newtheorem{Lemma}[Theorem]{Lemma}
\newtheorem{Proposition}[Theorem]{Proposition}
 { \theoremstyle{definition}
\newtheorem{Definition}[Theorem]{Definition}

\newtheorem{Remark}[Theorem]{Remark} }
\begin{document}
\allowdisplaybreaks

\newcommand{\arXivNumber}{2102.09175}

\renewcommand{\PaperNumber}{080}

\FirstPageHeading

\ShortArticleName{Connection Problem for an Extension of $q$-Hypergeometric Systems}

\ArticleName{Connection Problem for an Extension\\ of $\boldsymbol{q}$-Hypergeometric Systems}

\Author{Takahiko NOBUKAWA}

\AuthorNameForHeading{T.~Nobukawa}

\Address{Department of Mathematics, Kobe University, Rokko, Kobe 657-8501, Japan}
\Email{\href{mailto:tnobukw@math.kobe-u.ac.jp}{tnobukw@math.kobe-u.ac.jp}}

\ArticleDates{Received March 19, 2021, in final form October 14, 2022; Published online October 21, 2022}

\Abstract{We give an example of solutions of the connection problem associated with a certain system of linear $q$-difference equations recently introduced by Park. The result contains a connection formulas of the $q$-Lauricella hypergeometric function $\varphi_{D}$ and those of the $q$-generalized hypergeometric function ${}_{N+1}\varphi_{N}$ as special cases.}

\Keywords{$q$-difference equations; $q$-hypergeometric series; connection matrices; Yang--Baxter equation}

\Classification{33D70; 39A13}

\section{Introduction}\label{secintro}
In \cite{T}, a hypergeometric function $F_{L,N}$, which is a certain generalization of hypergeometric functions, was defined by Tsuda.
He also obtained a Hamiltonian system $\mathcal{H}_{L,N}$, which describes an isomonodromic deformation of an $L\times L$ Fuchsian system on $\mathbb{P}^{1}$ with $N+3$ regular singularities, and which has particular solutions in terms of the function $F_{L,N}$.
In \cite{P1,P2}, a $q$-analog of Tsuda's result was obtained by Park. Namely, she defined a $q$-hypergeometric function $\mathcal{F}_{N,M}$, which is given by~(\ref{ser1}) below, and a system $\mathcal{P}_{N,M}$ as a $q$-analog of the function $F_{N+1,M}$ and the system~$\mathcal{H}_{N+1,M}$, respectively.
The function $\mathcal{F}_{N,M}$ converges locally and satisfies linear $q$-difference equations, given by (\ref{eqn1}) and (\ref{eqn2}) below.
Also the function $\mathcal{F}_{N,M}$ has the Euler-type integral representation, given by~(\ref{JACKSON}).
Thus the function $\mathcal{F}_{N,M}$ must be continued analytically to $\mathbb{C}^{M}$.\looseness=-1

Our aim is to solve the connection problem related with the function $\mathcal{F}_{N,M}$, that is to give connection matrices among fundamental solutions of the $q$-difference equations~(\ref{eqn1}) and~(\ref{eqn2}).
The main result is Theorem \ref{thm1} in Section~\ref{secmat}.
Proposition~\ref{prop4} gives fundamental solutions for the equation, and Theorem~\ref{thm1} gives connection matrices among these fundamental solutions.
Our method to calculate the connection matrices is to use a connection formula of the generalized $q$-hypergeometric function $_{N+1}\varphi_{N}$ many times.
The connection matrices are given by the product of some matrices.

In the general theory of $q$-difference systems in several variables, local solutions should be characterized by the asymptotic behavior near the singularity in some prescribed sector.
For the general theory of $q$-difference systems in several variables, see~\cite{Ao1,Ao2}.
Our solutions are defined in the region $\{|t_{1}|\ll\cdots\ll|t_{L}|\ll1\ll|t_{L+1}|\ll\cdots\ll|t_{M}|\}$ where $0\leq L\leq M$, and characterized by the asymptotic behavior of the following form
\begin{gather*}
	t_{1}^{\delta_{1}}\cdots t_{M}^{\delta_{M}}(1+O(||x||)),
\end{gather*}
at $x=(t_{1}/t_{2},\dots,t_{L-1}/t_{L},t_{L},1/t_{L+1},t_{L+1}/t_{L+2},\dots,t_{M-1}/t_{M})=(0,\dots,0)$.
Here $\delta=(\delta_{1},\dots,\allowbreak \delta_{M})$ is a suitable parameter.
For more details, see Definition~\ref{defseries}, Proposition \ref{prop3}, Remark~\ref{remarkexp}.

The function $\mathcal{F}_{N,M}$ is a generalization both of the $q$-Lauricella function $\varphi_{D}$ and of the generalized $q$-hypergeometric function ${}_{N+1}\varphi_{N}$.
Thus our results contain solving the connection problem related with $\varphi_{D}$ and also related with ${}_{N+1}\varphi_{N}$.
The equality (\ref{wat1}) below can be considered as a part of the connection relations for ${}_{N+1}\varphi_{N}$, which was first studied by Thomae \cite{W} and was proved by Watson \cite{Wat}.
Also the $q$-Lauricella function has a Jackson integral representation of Jordan--Pochhammer type, and the connection problem related with the Jackson integral of Jordan--Pochhammer type was solved by Mimachi \cite{M}.
In differential case, fundamental solutions of the equation related with the Lauricella function $F_{D}$ were obtained by Gelfand--Kapranov--Zelevinsky \cite{GKZ} as an application of the theory of the GKZ hypergeometric function.
Our solutions are generalizations of the Gelfand--Kapranov--Zelevinsky's solutions (see Remark~\ref{remarkGKZ}).

The contents of this paper are as follows.
In Section~\ref{secpre}, we give notations and the properties of the function~$\mathcal{F}_{N,M}$ and the system of $q$-difference equations satisfied by~$\mathcal{F}_{N,M}$.
In Section~\ref{secconn}, we solve the connection problem of the system.
In Section~\ref{secsol}, we show fundamental solutions of the $q$-difference system.
In Section~\ref{secmat}, we give the matrices which connect the fundamental solution with the other fundamental solution.
In Section~\ref{secYB}, we obtain an elliptic solution of the Yang--Baxter equation as an application of Section~\ref{secmat}.

\section{Preliminaries}\label{secpre}

In this paper, we fix $q\in\mathbb{C}$ with $0<|q|<1$.
We use the following notations throughout the paper:
\begin{gather*}
	(a)_{\infty}=\prod_{k=0}^{\infty}\big(1-aq^{k}\big),\qquad
	(a)_{m}=\frac{(a)_{\infty}}{(aq^{m})_{\infty}},\\
	(a_{1},\dots,a_{n})_{m}=(a_{1})_{m}\cdots(a_{n})_{m},\qquad
	\theta(x)=(x,q/x)_{\infty}.
\end{gather*}
The symmetric group of degree $n$ is denoted by $\mathfrak{S}_{n}$.
In addition, ${}^{\rm T}A$ is the transpose of a~matrix~$A$.
We also use the notation $abc/defg$ for the fraction $(abc)/(defg)$.
Moreover, for a~multi-index $m=(m_{1},\dots,m_{M})$, we often use the notations
\begin{gather}
 |m|=\sum_{i=1}^{M}m_{i},\qquad
	\label{nl}
m(l)=\sum_{i=1}^{l}m_{i}-\sum_{i=l+1}^{M}m_{i},
\end{gather}
where $0\leq l\leq M$.
Here, the empty sum is considered to be 0.
\begin{Definition}[{\cite[Definition 2.1]{P1}}]
	We assume $c_{j}\notin q^{\mathbb{Z}_{\leq0}}=\{q^{n};\ n=0,-1,-2,\dots\}$ for $1\leq j\leq N$. We define the function $\mathcal{F}_{N,M}$ as
	\begin{gather}
		\label{ser1}
		\mathcal{F}_{N,M}\left(\!
		\begin{matrix}
			\{a_{j}\}_{1\leq j\leq N},\{b_{i}\}_{1\leq i\leq M}\\
			\{c_{j}\}_{1\leq j \leq N}	
		\end{matrix};\{t_{i}\}_{1\leq i\leq M}\!\right)=\sum_{m_{1},\dots,m_{M}\geq0}\prod_{j=1}^{N}\frac{(a_{j})_{|m|}}{(c_{j})_{|m|}}\prod_{i=1}^{M}\frac{(b_{i})_{m_{i}}}{(q)_{m_{i}}}\prod_{i=1}^{M}t_{i}^{m_{i}}.\!\!\!
	\end{gather}
	The series $\mathcal{F}_{N,M}$ converges in the region $|t_{i}|<1$.
\end{Definition}
When $N=1$ or $M=1$, we have
\begin{gather*}
	\mathcal{F}_{1,M}\left(
	\begin{matrix}
		a,\{b_{i}\}_{1\leq i\leq M}\\
		c	
	\end{matrix};\{t_{i}\}_{1\leq i\leq M}\right)=\varphi_{D}\left(\begin{matrix}
		a,\{b_{i}\}_{1\leq i\leq M}\\
		c	
	\end{matrix};\{t_{i}\}_{1\leq i\leq M}\right),\\
	\mathcal{F}_{N,1}\left(
	\begin{matrix}
		\{a_{j}\}_{1\leq j\leq N},b\\
		\{c_{j}\}_{1\leq j\leq N}	
	\end{matrix};t\right)={}_{N+1}\varphi_{N}\left(
	\begin{matrix}
		\{a_{j}\}_{1\leq j\leq N},\,b\\
		\{c_{j}\}_{1\leq j\leq N}	
	\end{matrix};t\right),
\end{gather*}
where $\varphi_{D}$ and ${}_{N+1}\varphi_{N}$ are the $q$-Lauricella function and the generalized $q$-hypergeometric function defined by
\begin{gather*}
	\varphi_{D}\left(\begin{matrix}
		a,\{b_{i}\}_{1\leq i\leq M}\\
		c	
	\end{matrix};\{t_{i}\}_{1\leq i\leq M}\right)=\sum_{m_{1},\dots,m_{M}\geq0}\frac{(a)_{|m|}}{(c)_{|m|}}\prod_{i=1}^{M}\frac{(b_{i})_{m_{i}}}{(q)_{m_{i}}}\prod_{i=1}^{M}t_{i}^{m_{i}},\\
	{}_{N+1}\varphi_{N}\left(
	\begin{matrix}
		\{a_{j}\}_{1\leq j\leq N},\,b\\
		\{c_{j}\}_{1\leq j\leq N}	
	\end{matrix};t\right)=\sum_{m\geq0}\prod_{j=1}^{N}\frac{(a_{j})_{m}}{(c_{j})_{m}}\cdot\frac{(b)_{m}}{(q)_{m}}t^{m}.
\end{gather*}
\begin{Proposition}[{\cite[Proposition 2.1]{P1}}]
	The series $\mathcal{F}_{N,M}$ satisfies the relation
	\begin{gather}
		\nonumber\mathcal{F}_{N,M}\left(
		\begin{matrix}
			\{a_{j}\}_{1\leq j\leq N},\{b_{i}\}_{1\leq i\leq M}\\
			\{c_{j}\}_{1\leq j\leq N}	
		\end{matrix};\{t_{i}\}_{1\leq i\leq M}\right)\\
		\label{dual1}
\qquad{} =\prod_{j=1}^{N}\frac{(a_{j})_{\infty}}{(c_{j})_{\infty}}\prod_{i=1}^{M}\frac{(b_{i}t_{i})_{\infty}}{(t_{i})_{\infty}}\cdot\mathcal{F}_{M,N}\left(
		\begin{matrix}
			\{t_{i}\}_{1\leq i\leq M},\{c_{j}/a_{j}\}_{1\leq j\leq N}\\
			\{b_{i}t_{i}\}_{1\leq i\leq M}	
		\end{matrix};\{a_{j}\}_{1\leq j\leq N}\right).
	\end{gather}
\end{Proposition}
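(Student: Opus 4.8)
The plan is to recognize~(\ref{dual1}) as a multivariable analogue of Heine's first transformation of ${}_{2}\varphi_{1}$ and to prove it by applying the $q$-binomial theorem
\begin{gather*}
	\sum_{n\geq0}\frac{(\alpha)_{n}}{(q)_{n}}w^{n}=\frac{(\alpha w)_{\infty}}{(w)_{\infty}},\qquad |w|<1,
\end{gather*}
twice: once to ``unfold'' the $N$ collective factors $(a_{j})_{|m|}/(c_{j})_{|m|}$ into new summation variables $n_{1},\dots,n_{N}$, and once to sum away the original variables $m_{1},\dots,m_{M}$ carried by the individual factors $(b_{i})_{m_{i}}/(q)_{m_{i}}$. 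Here I write $|n|=\sum_{j=1}^{N}n_{j}$ in parallel with~(\ref{nl}).

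First I would rewrite each collective factor using $(a)_{m}=(a)_{\infty}/(aq^{m})_{\infty}$ and expand it by the $q$-binomial theorem with $w=a_{j}q^{|m|}$, $\alpha=c_{j}/a_{j}$:
\begin{gather*}
	\frac{(a_{j})_{|m|}}{(c_{j})_{|m|}}=\frac{(a_{j})_{\infty}}{(c_{j})_{\infty}}\cdot\frac{\big(c_{j}q^{|m|}\big)_{\infty}}{\big(a_{j}q^{|m|}\big)_{\infty}}=\frac{(a_{j})_{\infty}}{(c_{j})_{\infty}}\sum_{n_{j}\geq0}\frac{(c_{j}/a_{j})_{n_{j}}}{(q)_{n_{j}}}a_{j}^{n_{j}}q^{|m|n_{j}}.
\end{gather*}
Substituting this for each $j$ into~(\ref{ser1}) and interchanging the order of summation produces the factor $\prod_{j}q^{|m|n_{j}}=q^{|m||n|}$, which I would redistribute as $\prod_{i}q^{m_{i}|n|}$ so that the inner $m$-sum factorizes over~$i$ and each factor is summed by the $q$-binomial theorem again:
\begin{gather*}
	\sum_{m_{i}\geq0}\frac{(b_{i})_{m_{i}}}{(q)_{m_{i}}}\big(t_{i}q^{|n|}\big)^{m_{i}}=\frac{\big(b_{i}t_{i}q^{|n|}\big)_{\infty}}{\big(t_{i}q^{|n|}\big)_{\infty}}.
\end{gather*}
It then remains to simplify the infinite products: using $\big(xq^{|n|}\big)_{\infty}=(x)_{\infty}/(x)_{|n|}$ for $x=b_{i}t_{i}$ and $x=t_{i}$ pulls out the prefactor $\prod_{i}(b_{i}t_{i})_{\infty}/(t_{i})_{\infty}$ and leaves the ratio $(t_{i})_{|n|}/(b_{i}t_{i})_{|n|}$ inside the sum. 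Collecting everything, the surviving sum over $n=(n_{1},\dots,n_{N})$ is by~(\ref{ser1}) precisely $\mathcal{F}_{M,N}$ with first parameters $\{t_{i}\}$, second parameters $\{c_{j}/a_{j}\}$, lower parameters $\{b_{i}t_{i}\}$ and variables $\{a_{j}\}$, which yields~(\ref{dual1}).

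The two invocations of the $q$-binomial theorem and the bookkeeping of the exponent $q^{|m||n|}$ are routine. The main obstacle I expect is the analytic justification rather than the formal identity: the interchange of the $(M{+}N)$-fold summation must be legitimized by absolute convergence, and the first application of the $q$-binomial theorem needs $\big|a_{j}q^{|m|}\big|<1$, hence $|a_{j}|<1$. I would therefore first establish the identity on a region where all $|a_{j}|$ and $|t_{i}|$ are small enough to guarantee absolute convergence of every intermediate multiple series (this also ensures convergence of the resulting $\mathcal{F}_{M,N}$, whose variables are the $a_{j}$), and then extend it to the full domain of~$\mathcal{F}_{N,M}$ by analytic continuation in the parameters.
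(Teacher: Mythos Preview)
Your argument is correct: the two applications of the $q$-binomial theorem, the redistribution of $q^{|m||n|}$, and the identification of the remaining $n$-sum as $\mathcal{F}_{M,N}$ all go through exactly as you describe, and your remarks about the region of absolute convergence and analytic continuation in the $a_{j}$ are the right way to handle the justification.

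As for comparison with the paper: note that the paper does not actually supply a proof of this proposition. It is quoted verbatim from \cite[Proposition~2.1]{P1} and used as input, so there is no ``paper's own proof'' to compare against. Your approach is, however, the natural multivariable extension of Heine's classical argument for ${}_{2}\varphi_{1}$ (cf.\ the Remark immediately following the proposition, which identifies the $N=1$ case with Andrews' generalization of Heine's transformation), and it is presumably the proof Park had in mind in~\cite{P1}.
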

\begin{Remark}
	When $N=1$, the relation (\ref{dual1}) reduces to
	\begin{gather*}
		\varphi_{D}\left(
		\begin{matrix}
			a,\{b_{i}\}_{1\leq i\leq M}\\
			c	
		\end{matrix};\{t_{i}\}_{1\leq i\leq M}\right)=\frac{(a)_{\infty}}{(c)_{\infty}}\prod_{i=1}^{M}\frac{(b_{i}t_{i})_{\infty}}{(t_{i})_{\infty}}\cdot{}_{M+1}\varphi_{M}\left(
		\begin{matrix}
			\{t_{i}\}_{1\leq i\leq M},c/a\\
			\{b_{i}t_{i}\}_{1\leq i\leq M}	
		\end{matrix};a\right),
	\end{gather*}
	which is relation~(4.1) of \cite{A}, a generalization of Heine's transformation for ${}_{2}\varphi_{1}$ \cite[equation~(1.4.1)]{A}.
\end{Remark}
It is well known that Heine's transformation can be interpreted as a $q$-integral form, so called a Jackson integral, of ${}_{2}\varphi_{1}$.
Similarly, the relation~(\ref{dual1}) can be rewritten in the Jackson integral form as follows.
\begin{Corollary}[{\cite[Corollary 2.1]{P1}}]
	With $a_{j}=q^{\alpha_{j}}$, the relation \eqref{dual1} can be rewritten as
	\begin{gather}
		\nonumber\mathcal{F}_{N,M}\left(
		\begin{matrix}
			\{q^{\alpha_{j}}\}_{1\leq j\leq N},\{b_{i}\}_{1\leq i\leq M}\\
			\{c_{j}\}_{1\leq j\leq N}	
		\end{matrix};\{t_{i}\}_{1\leq i\leq M}\right)\\
\qquad{} =\prod_{i=1}^{N}\frac{(q^{\alpha_{i}},c_{i}/q^{\alpha_{i}})_{\infty}}{(c_{i},q)_{\infty}}\cdot\int_{0}^{1}\cdots \int_{0}^{1}\prod_{j=1}^{N}\left\{\frac{z_{j}^{\alpha_{j}-1}}{1-q}\frac{(qz_{j})_{\infty}}{(c_{j}z_{j}/q^{\alpha_{j}})_{\infty}}\right\}\nonumber\\
\qquad\quad{}\times \prod_{i=1}^{M}\frac{(b_{i}t_{i}z_{1}\cdots z_{N})_{\infty}}{(t_{i}z_{1}\cdots z_{N})_{\infty}}{\rm d}_{q}z_{1}\cdots {\rm d}_{q}z_{N},\label{JACKSON}
	\end{gather}
	where the Jackson integral is defined as
	\begin{gather*}
		\int_{0}^{c}f(z){\rm d}_{q}z=c(1-q)\sum_{m\geq0}f(cq^{m})q^{m},
	\end{gather*}
	for $c\in\mathbb{C}$.
\end{Corollary}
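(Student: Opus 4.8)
The plan is to prove the Jackson-integral representation (\ref{JACKSON}) by evaluating the $N$-fold Jackson integral on its right-hand side directly from the definition and recognizing the outcome as the function $\mathcal{F}_{M,N}$ that already appears in the dual relation (\ref{dual1}). Writing $F$ for the integrand of (\ref{JACKSON}), the definition of the Jackson integral turns the $N$-fold integral over $[0,1]^{N}$ into a lattice sum,
\[
\int_{0}^{1}\cdots\int_{0}^{1}F(z_{1},\dots,z_{N})\,{\rm d}_{q}z_{1}\cdots{\rm d}_{q}z_{N}=(1-q)^{N}\sum_{n_{1},\dots,n_{N}\geq0}F\big(q^{n_{1}},\dots,q^{n_{N}}\big)\prod_{j=1}^{N}q^{n_{j}},
\]
so the whole task reduces to evaluating the integrand at the grid points $z_{j}=q^{n_{j}}$. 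The single tool needed is the exact identity $\big(xq^{m}\big)_{\infty}=(x)_{\infty}/(x)_{m}$, which is just a restatement of the definition $(a)_{m}=(a)_{\infty}/\big(aq^{m}\big)_{\infty}$; because it is exact for every $m\geq0$, each summand is a finite product of genuinely convergent infinite products and no interchange of infinite sums ever occurs.

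First I would set $a_{j}=q^{\alpha_{j}}$, substitute $z_{j}=q^{n_{j}}$, and note $z_{1}\cdots z_{N}=q^{|n|}$. Applying the identity above to each factor (with shift $n_{j}$ on the factors depending on the single variable $z_{j}$, and shift $|n|$ on the factors depending on $z_{1}\cdots z_{N}$) gives, at $z_{j}=q^{n_{j}}$,
\[
\frac{\big(qz_{j}\big)_{\infty}}{\big(c_{j}z_{j}/q^{\alpha_{j}}\big)_{\infty}}=\frac{(q)_{\infty}}{(c_{j}/a_{j})_{\infty}}\frac{(c_{j}/a_{j})_{n_{j}}}{(q)_{n_{j}}},\qquad \frac{\big(b_{i}t_{i}z_{1}\cdots z_{N}\big)_{\infty}}{\big(t_{i}z_{1}\cdots z_{N}\big)_{\infty}}=\frac{(b_{i}t_{i})_{\infty}}{(t_{i})_{\infty}}\frac{(t_{i})_{|n|}}{(b_{i}t_{i})_{|n|}},
\]
together with $z_{j}^{\alpha_{j}-1}=a_{j}^{n_{j}}q^{-n_{j}}$. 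The prefactors $1/(1-q)$ cancel the $(1-q)^{N}$ of the Jackson integral, the powers $q^{-n_{j}}$ cancel the weights $q^{n_{j}}$, and pulling the $n$-independent infinite products to the front leaves precisely the series (\ref{ser1}) for $\mathcal{F}_{M,N}$ with upper parameters $\{t_{i}\},\{c_{j}/a_{j}\}$, lower parameters $\{b_{i}t_{i}\}$ and arguments $\{a_{j}\}$:
\[
\int_{0}^{1}\cdots\int_{0}^{1}F\,{\rm d}_{q}z_{1}\cdots{\rm d}_{q}z_{N}=\prod_{j=1}^{N}\frac{(q)_{\infty}}{(c_{j}/a_{j})_{\infty}}\prod_{i=1}^{M}\frac{(b_{i}t_{i})_{\infty}}{(t_{i})_{\infty}}\cdot\mathcal{F}_{M,N}\!\left(\begin{matrix}\{t_{i}\},\{c_{j}/a_{j}\}\\ \{b_{i}t_{i}\}\end{matrix};\{a_{j}\}\right).
\]

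Finally I would eliminate $\mathcal{F}_{M,N}$ using (\ref{dual1}), which expresses that same $\mathcal{F}_{M,N}$ as $\mathcal{F}_{N,M}$ divided by $\prod_{j}(a_{j})_{\infty}/(c_{j})_{\infty}\cdot\prod_{i}(b_{i}t_{i})_{\infty}/(t_{i})_{\infty}$. Substituting cancels the factor $\prod_{i}(b_{i}t_{i})_{\infty}/(t_{i})_{\infty}$ entirely and leaves the Jackson integral equal to $\prod_{j}(q)_{\infty}(c_{j})_{\infty}/\big[(c_{j}/q^{\alpha_{j}})_{\infty}(q^{\alpha_{j}})_{\infty}\big]$ times $\mathcal{F}_{N,M}$; inverting this constant produces exactly the prefactor $\prod_{i}(q^{\alpha_{i}},c_{i}/q^{\alpha_{i}})_{\infty}/(c_{i},q)_{\infty}$ of (\ref{JACKSON}). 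I expect the only real obstacle to be bookkeeping: one must apply $\big(xq^{m}\big)_{\infty}=(x)_{\infty}/(x)_{m}$ with the correct shift in each factor and keep track of every constant so that the cancellations of $(1-q)^{N}$, of $q^{\pm n_{j}}$, and of $\prod_{i}(b_{i}t_{i})_{\infty}/(t_{i})_{\infty}$ all come out exactly. Once the substitution $z_{j}=q^{n_{j}}$ is made the identity is purely termwise, so no analytic difficulty such as justifying an interchange of limits arises, and convergence is already guaranteed by that of $\mathcal{F}_{M,N}$.
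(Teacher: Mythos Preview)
Your proof is correct and follows exactly the route the paper indicates: the sentence preceding the corollary says ``the relation (\ref{dual1}) can be rewritten in the Jackson integral form as follows'', and you do precisely that, expanding the Jackson integral as a lattice sum, recognizing it as $\mathcal{F}_{M,N}$, and invoking (\ref{dual1}). The paper gives no further details (it cites \cite[Corollary~2.1]{P1}), so your computation is in fact more explicit than what appears here.
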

\begin{Proposition}[{\cite[Proposition 2.2]{P1}}]
	The series
\[
\mathcal{F}=\mathcal{F}_{N,M}\left(
	\begin{matrix}
		\{a_{j}\}_{1\leq j\leq N},\{b_{i}\}_{1\leq i\leq M}\\
		\{c_{j}\}_{1\leq j\leq N}	
	\end{matrix};\{t_{i}\}_{1\leq i\leq M}\right)
\]
 satisfies the $q$-difference equations
	\begin{gather}
		\label{eqn1}
		\Biggl\{t_{s}\prod_{j=1}^{N}(1-a_{j}T)\cdot(1-b_{s}T_{s})-\prod_{j=1}^{N}\big(1-c_{j}q^{-1}T\big)\cdot(1-T_{s})\Biggr\}\mathcal{F}=0, \qquad 1\leq s\leq M,\\
		\label{eqn2}
		 \{t_{r}(1-b_{r}T_{r})(1-T_{s})-t_{s}(1-b_{s}T_{s})(1-T_{r})\}\mathcal{F}=0,\qquad 1\leq r<s\leq M,
	\end{gather}
	where $T_{s}$ is the $q$-shift operator $T_{s}f(t)=f(\dots,qt_{s},\dots)$ for the variable $t_{s}$ and $T=\prod_{s=1}^{M}T_{s}$.
\end{Proposition}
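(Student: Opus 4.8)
The plan is to substitute the power series \eqref{ser1} directly into each equation and verify that every coefficient vanishes. Write $\mathcal{F}=\sum_{m}c_m\,t^m$, where $t^m=\prod_{i=1}^{M}t_i^{m_i}$ and
\[
c_m=\prod_{j=1}^N\frac{(a_j)_{|m|}}{(c_j)_{|m|}}\prod_{i=1}^M\frac{(b_i)_{m_i}}{(q)_{m_i}}.
\]
Since $T_s t^m=q^{m_s}t^m$ and $Tt^m=q^{|m|}t^m$, each of the operators $1-a_jT$, $1-b_sT_s$, $1-c_jq^{-1}T$, $1-T_s$ acts on a monomial by multiplication by a scalar, while the prefactors $t_s$ and $t_r$ raise a single index by one. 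Applying either equation to $\mathcal{F}$ therefore again produces a convergent power series (the term-by-term operation being justified by local uniform convergence in $|t_i|<1$), and it suffices to check that the coefficient of each monomial $t^n$ is zero. Throughout I would use only the elementary shift identity $(a)_m=(1-aq^{m-1})(a)_{m-1}$, immediate from $(a)_m=\prod_{k=0}^{m-1}(1-aq^k)$.

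For equation \eqref{eqn1}, writing $e_s$ for the $s$-th unit multi-index, the term $t_s\prod_j(1-a_jT)(1-b_sT_s)$ applied to $c_{n-e_s}t^{n-e_s}$ contributes $c_{n-e_s}\prod_j(1-a_jq^{|n|-1})(1-b_sq^{n_s-1})$ to the coefficient of $t^n$ (note $|n-e_s|=|n|-1$), while $\prod_j(1-c_jq^{-1}T)(1-T_s)$ applied to $c_nt^n$ contributes $-c_n\prod_j(1-c_jq^{|n|-1})(1-q^{n_s})$. The shift identity gives the ratio
\[
\frac{c_n}{c_{n-e_s}}=\prod_{j=1}^N\frac{1-a_jq^{|n|-1}}{1-c_jq^{|n|-1}}\cdot\frac{1-b_sq^{n_s-1}}{1-q^{n_s}},
\]
and substituting it into the second contribution reduces it to $-c_{n-e_s}\prod_j(1-a_jq^{|n|-1})(1-b_sq^{n_s-1})$, so the two terms cancel exactly. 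Equation \eqref{eqn2} is handled identically: collecting the coefficient of $t^n$ yields $c_{n-e_r}(1-b_rq^{n_r-1})(1-q^{n_s})-c_{n-e_s}(1-b_sq^{n_s-1})(1-q^{n_r})$, and since the $a$- and $c$-factors of $c_{n-e_r}$ and $c_{n-e_s}$ agree (both have $|n|-1$), the shift identity in the single indices $n_r,n_s$ gives
\[
\frac{c_{n-e_r}}{c_{n-e_s}}=\frac{(1-q^{n_r})(1-b_sq^{n_s-1})}{(1-b_rq^{n_r-1})(1-q^{n_s})},
\]
which forces the two terms to coincide.

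The only point requiring any care is the behaviour at the boundary of the summation, where a shifted index such as $n_s-1$ would become negative and the corresponding monomial is simply absent. This causes no difficulty, and in fact is the reason the factors $1-T_s$ and $1-T_r$ appear: whenever $n_s=0$ the factor $1-q^{n_s}$ vanishes and already annihilates the partner term, and likewise for $n_r=0$ in \eqref{eqn2}, so the recurrences close automatically and no separate boundary argument is needed. I do not expect a genuine conceptual obstacle here; the entire argument is a direct coefficient comparison, and the only real labour is the careful bookkeeping of the multi-index shifts and the correct application of the Pochhammer shift identity in the two ratios above.
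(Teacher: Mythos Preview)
Your proof is correct. The paper does not supply its own proof of this proposition, citing it instead from Park~[P1]; the identical coefficient-comparison technique does appear, however, in the paper's proof of Proposition~\ref{prop3}, where the more general series $\mathcal{F}_{N,M}^{L}$ (note $\mathcal{F}_{N,M}^{M}=\mathcal{F}_{N,M}$) are shown to satisfy $E_{N,M}$ by exactly the same reindexing and $q$-Pochhammer shift-identity argument you use.
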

In this paper, we use the notation $E_{N,M}$ for the system of $q$-difference equations~(\ref{eqn1}) and~(\ref{eqn2}).
\begin{Theorem}[{\cite[Theorem 4.1]{P2}}]
	The rank of $E_{N,M}$ is $NM+1$.
\end{Theorem}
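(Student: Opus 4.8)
The plan is to compute the holonomic rank as the dimension, over the field $\mathbb{K}=\mathbb{C}(t_{1},\dots,t_{M})$ of rational functions, of the quotient $\mathcal{A}/I$, where $\mathcal{A}=\mathbb{K}\langle T_{1},\dots,T_{M}\rangle$ is the Ore algebra of $q$-difference operators (with the twisting $T_{s}t_{s}=qt_{s}T_{s}$) and $I$ is the left ideal generated by the operators in~(\ref{eqn1}) and~(\ref{eqn2}). Equivalently, one exhibits a finite set of shifts $T^{\alpha}\mathcal{F}$ closed under every $T_{s}$, which turns $E_{N,M}$ into a first-order Pfaffian system $T_{s}Y=A_{s}Y$ with $A_{s}\in GL(\mathbb{K})$; the size of $Y$ is then the rank. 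The candidate set of standard monomials is
\[
\{1\}\cup\{T_{s}^{k}\colon 1\le s\le M,\ 1\le k\le N\},
\]
which has exactly $MN+1$ elements, so the whole task is to prove that this set both spans and is free modulo $I$.

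For the spanning bound $\operatorname{rank}\le MN+1$ I would reduce an arbitrary monomial $T^{\alpha}=T_{1}^{\alpha_{1}}\cdots T_{M}^{\alpha_{M}}$ against the two families of relations. First, the operator in~(\ref{eqn2}) expands with the mixed monomial $T_{r}T_{s}$ as its top term, carrying the coefficient $t_{r}b_{r}-t_{s}b_{s}$, which is a nonzero and hence invertible element of $\mathbb{K}$; thus $T_{r}T_{s}$ is congruent modulo $I$ to a $\mathbb{K}$-combination of $1,T_{r},T_{s}$. Left-multiplying this relation by monomials and iterating over all pairs of active indices rewrites every monomial supported on two or more variables in terms of single-variable monomials together with $1$. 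Second, for each $s$ the operator in~(\ref{eqn1}) has top term $T^{N}T_{s}=T_{1}^{N}\cdots T_{s}^{N+1}\cdots T_{M}^{N}$; after eliminating all its mixed monomials by the previous step it becomes a relation purely in $T_{s}$, of degree $N+1$, whose leading coefficient is a nonzero element of $\mathbb{K}$ under the standing hypothesis $c_{j}\notin q^{\mathbb{Z}_{\le0}}$. This caps the pure powers at exponent $N$ and leaves the spanning set displayed above.

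For the matching lower bound $\operatorname{rank}\ge MN+1$ I would produce $MN+1$ linearly independent solutions, namely the fundamental solutions constructed later in Proposition~\ref{prop4}: their leading exponents $t^{\delta}$ are mutually distinct, so their linear independence over the field of $q$-invariant functions is immediate. Combining the two bounds gives $\operatorname{rank}=MN+1$. As consistency checks, the case $M=1$ recovers the single order-$(N+1)$ equation satisfied by ${}_{N+1}\varphi_{N}$, while $N=1$ recovers the rank $M+1$ of the $q$-Lauricella system for $\varphi_{D}$, and the duality~(\ref{dual1}) exchanging $(N,M)$ is compatible with the symmetric answer $MN+1$.

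The main obstacle is confluence of the reduction: the two families must be compatible so that no standard monomial collapses further and the count is exactly $MN+1$ rather than merely an upper bound. Concretely, I expect to verify that the generators of~(\ref{eqn1}) and~(\ref{eqn2}), together with the induced pure-power relations, form a Gröbner basis of $I$ in $\mathcal{A}$, i.e.\ that all S-polynomials between a~(\ref{eqn1})-operator and a~(\ref{eqn2})-operator, and between two~(\ref{eqn2})-operators, reduce to zero; the noncommutative twisting $T_{s}t_{s}=qt_{s}T_{s}$ makes the bookkeeping of $\mathbb{K}$-coefficients, and the verification that the reduced form of $T^{N}T_{s}$ retains a nonzero coefficient on $T_{s}^{N+1}$, the delicate steps. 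The lower-bound route via explicit solutions is precisely what lets one sidestep a full confluence argument: once a spanning set of size $MN+1$ and $MN+1$ independent solutions are both in hand, equality of the two numbers forces the spanning set to be a basis.
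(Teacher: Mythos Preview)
The paper does not prove this theorem; it is quoted from \cite[Theorem~4.1]{P2} and used as input for the rest of Section~\ref{secconn}. There is therefore no proof in the present paper to compare your proposal against.

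Your two-sided strategy is the natural one, and the paper already supplies the lower-bound ingredients: Proposition~\ref{prop3} together with Lemma~\ref{lem3} produces $NM+1$ solutions that are linearly independent over the field of pseudo-constants, hence rank $\ge NM+1$. (Cite these rather than Proposition~\ref{prop4}: the word ``fundamental'' there already presupposes that the rank equals $NM+1$, so invoking it would be circular.) For the upper bound there is one imprecision and one genuine gap in your sketch. The imprecision: after eliminating mixed monomials from the operator~(\ref{eqn1}) you do not obtain a relation purely in $T_{s}$ but a relation in pure powers of \emph{all} the $T_{r}$, with $T_{s}^{N+1}$ the only term outside your proposed spanning set; this is harmless for the argument. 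The gap: the spanning bound requires the coefficient of $T_{s}^{N+1}$ to be a nonzero element of $\mathbb{K}$ \emph{after} all the $T_{r}T_{r'}$-reductions have been performed on~(\ref{eqn1}), and you have not computed it. Your observation that the solution count sidesteps a confluence check is correct, but it does not sidestep this nonvanishing: if that coefficient were identically zero, your reduction would not cap the $T_{s}$-degree at $N$, the spanning set could be strictly larger than $NM+1$, and the upper bound would not follow. Verifying that coefficient is the missing step.
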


\section[Connection problem of the q-difference system E\_\{N,M\}]{Connection problem of the $\boldsymbol{q}$-difference system $\boldsymbol{E_{N,M}}$}\label{secconn}

In this section, we consider the connection problem of the system $E_{N,M}$.
First, we show fundamental solutions of the system $E_{N,M}$, which converge locally.
Second, we calculate connection matrices.
The connection formula can be calculated by using Thomae--Watson's formula~(\ref{wat1}) many times.
We suppose $a_{j}=q^{\alpha_{j}}$, $b_{i}=q^{\beta_{i}}$ and $c_{j}=q^{\gamma_{j}}$, $1\leq i\leq M$, $1\leq j\leq N$.

\subsection[Solutions of the q-difference system E\_\{N,M\}]{Solutions of the $\boldsymbol{q}$-difference system $\boldsymbol{E_{N,M}}$}\label{secsol}

In this subsection, we show fundamental solutions of the $q$-difference system $E_{N,M}$ which converge locally.
\begin{Definition}\label{defseries}
	We assume
	\begin{gather}\label{COND3}
		a_{j}/a_{k}, c_{j}/c_{k}, a_{j}/b_{i}\cdots b_{M}, c_{k}/b_{i}\cdots b_{M}\notin q^{\mathbb{Z}},
	\end{gather}
	for $1\leq i\leq M+1$, $1\leq j\neq k\leq N$.
	Here, $q^{D}=\{q^x;\, x\in D\}$.
	We define series $\mathcal{F}_{N,M}^{L}$, $\mathcal{F}_{N,M}^{L;k,l}$ and $\mathcal{G}_{N,M}^{L;k,l'}$ as
	\begin{gather}
		\nonumber\mathcal{F}_{N,M}^{L}\left(
		\begin{matrix}
			\{a_{j}\}_{1\leq j\leq N},\{b_{i}\}_{1\leq i\leq M}\\
			\{c_{j}\}_{1\leq j\leq N}	
		\end{matrix};\{t_{i}\}_{1\leq i\leq M}\right)\\
		\label{sum1}
\qquad{} =\sum_{m_{1},\dots,m_{M}\geq0}\prod_{j=1}^{N}\frac{(a_{j}/b_{L+1}\cdots b_{M})_{m(L)}}{(c_{j}/b_{L+1}\cdots b_{M})_{m(L)}}\prod_{i=1}^{M}\frac{(b_{i})_{m_{i}}}{(q)_{m_{i}}}\prod_{i=1}^{L}t_{i}^{m_{i}}\prod_{i=L+1}^{M}\left(\frac{q}{b_{i}t_{i}}\right)^{m_{i}},\\
		\mathcal{F}_{N,M}^{L;k,l}\left(
		\begin{matrix}
			\{a_{j}\}_{1\leq j\leq N},\{b_{i}\}_{1\leq i\leq N}\\
			\{c_{j}\}_{1\leq j\leq N}	
		\end{matrix};\{t_{i}\}_{1\leq i\leq M}\right)
=\sum_{m_{1},\dots,m_{M}\geq0}\Biggl\{\prod_{j=1}^{N}\frac{(qa_{k}/c_{j})_{m_{L+1}}}{(qa_{k}/a_{j})_{m_{L+1}}}\nonumber\\
\qquad{}\times \prod_{i=1}^{L}\frac{(b_{i})_{m_{i}}}{(q)_{m_{i}}}\prod_{i=L+1}^{l-1}\frac{(b_{i})_{m_{i+1}}}{(q)_{m_{i+1}}} \prod_{i=l+1}^{M}\frac{(b_{i})_{m_{i}}}{(q)_{m_{i}}}\frac{(a_{k}/b_{l+1}\cdots b_{M})_{m(l)}}{(qa_{k}/b_{l}\cdots b_{M})_{m(l)}}\nonumber\\
		\qquad {}\times\prod_{i=1}^{L}\left(\frac{qt_{i}}{b_{l}t_{l}}\right)^{m_{i}}\prod_{i=L+1}^{l-1}\left(\frac{qt_{i}}{b_{l}t_{l}}\right)^{m_{i+1}}\prod_{i=l+1}^{M}\left(\frac{b_{l}t_{l}}{b_{i}t_{i}}\right)^{m_{i}}\left(\prod_{j=1}^{N}\frac{c_{j}}{a_{j}}\cdot\frac{q}{b_{l}t_{l}}\right)^{m_{L+1}}\Biggr\},\label{sum2}\\
		\mathcal{G}_{N,M}^{L;k,l'}\left(
		\begin{matrix}
			\{a_{j}\}_{1\leq j\leq N},\{b_{i}\}_{1\leq i\leq M}\\
			\{c_{j}\}_{1\leq j\leq N}	
		\end{matrix};\{t_{i}\}_{1\leq i\leq M}\right)=\sum_{m_{1},\dots,m_{M}\geq0}\Biggl\{\prod_{j=1}^{N}\frac{(qa_{j}/c_{k})_{m_{L}}}{(qc_{j}/c_{k})_{m_{L}}}\nonumber\\
\qquad{}\times \prod_{i=1}^{l'-1}\frac{(b_{i})_{m_{i}}}{(q)_{m_{i}}}\prod_{i=l'+1}^{L}\frac{(b_{i})_{m_{i-1}}}{(q)_{m_{i-1}}}\prod_{i=L+1}^{M}\frac{(b_{i})_{m_{i}}}{(q)_{m_{i}}}\frac{(c_{k}/qb_{l'+1}\cdots b_{M})_{m(l'-1)}}{(c_{k}/b_{l'}\cdots b_{M})_{m(l'-1)}}\nonumber\\
\qquad\times\prod_{i=1}^{l'-1}\left(\frac{qt_{i}}{b_{l'}t_{l'}}\right)^{m_{i}}\prod_{i=l'+1}^{L}\left(\frac{b_{l'}t_{l'}}{b_{i}t_{i}}\right)^{m_{i-1}}\prod_{i=L+1}^{M}\left(\frac{b_{l'}t_{l'}}{b_{i}t_{i}}\right)^{m_{i}}\cdot\left(\frac{b_{l'}t_{l'}}{q}\right)^{m_{L}}\Biggr\},\label{sum3}
	\end{gather}
	where $0\leq L \leq M$, $1\leq k\leq N$, $L+1\leq l\leq M$ and $1\leq l'\leq L$.
	Here, as mentioned in preliminaries~(\ref{nl}),
	\begin{gather*}
		m(l)=\sum_{i=1}^{l}m_{i}-\sum_{i=l+1}^{M}m_{i}.
	\end{gather*}
\end{Definition}
We find that the series (\ref{sum1}) converges in
\[
\left\{|t_{i}|<1, \, 1\leq i\leq L,\, \left|\frac{c_{1}\cdots c_{N}q}{a_{1}\cdots a_{N}b_{i}t_{i}}\right|<1,\, L+1\leq i\leq M\right\},
\]
 the series (\ref{sum2}) converges in
 \[
 \left\{\left|\frac{c_{1}\cdots c_{N}q}{a_{1}\cdots a_{N}b_{l}t_{l}}\right|<1,\, \left|\frac{qt_{i}}{b_{l}t_{l}}\right|<1,\, 1\leq i\leq l-1,\, \left|\frac{qt_{l}}{b_{i}t_{i}}\right|<1,\, l+1\leq i\leq M\right\},
 \]
and the series (\ref{sum3}) converges in
\[
\left\{|t_{l}|<1,\, \left|\frac{qt_{i}}{b_{l}t_{l}}\right|<1,\, 1\leq i\leq l-1,\, \left|\frac{qt_{l}}{b_{i}t_{i}}\right|<1,\, l+1\leq i\leq M\right\}.
\]
\begin{Proposition}
	\label{prop3}
	We assume the conditions \eqref{COND3}.
	For $0\leq L\leq M$, the functions
	\begin{gather}
		\label{sol1}\prod_{i=L+1}^{M}t_{i}^{-\beta_{i}}
		\cdot\mathcal{F}_{N,M}^{L}\left(
		\begin{matrix}
			\{a_{j}\}_{1\leq j\leq N},\{b_{i}\}_{1\leq i\leq M}\\
			\{c_{j}\}_{1\leq j\leq N}	
		\end{matrix};\{t_{i}\}_{1\leq i\leq M}\right),\\
		\label{sol2}t_{l}^{1+\sum_{i=l+1}^{M}\beta_{i}-\gamma_{k}}
		\prod_{i=l+1}^{M}t_{i}^{-\beta_{i}}
		\cdot\mathcal{G}_{N,M}^{L;k,l}\left(
		\begin{matrix}
			\{a_{j}\}_{1\leq j\leq N},\{b_{i}\}_{1\leq i\leq M}\\
			\{c_{j}\}_{1\leq j\leq N}	
		\end{matrix};\{t_{i}\}_{1\leq i\leq M}\right),\\ \nonumber\hspace{8cm} 1\leq k\leq N,\ 1\leq l\leq L,\\
		\label{sol3}t_{l}^{-\alpha_{k}+\sum_{i=l+1}^{M}\beta_{i}}
		\prod_{i=l+1}^{M}t_{i}^{-\beta_{i}}
		\cdot\mathcal{F}_{N,M}^{L;k,l}\left(
		\begin{matrix}
			\{a_{j}\}_{1\leq j\leq N},\{b_{i}\}_{1\leq i\leq M}\\
			\{c_{j}\}_{1\leq j\leq N}	
		\end{matrix};\{t_{i}\}_{1\leq i\leq M}\right),\\ \nonumber\hspace{8cm} 1\leq k\leq N,\ L+1\leq l\leq M,
	\end{gather}
	satisfy the $q$-difference system $E_{N,M}$.
\end{Proposition}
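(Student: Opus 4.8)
The plan is to substitute each of the three ansätze into the operator families (\ref{eqn1}) and (\ref{eqn2}) and to reduce every resulting identity to a recurrence among the coefficients of the defining series (\ref{sum1})--(\ref{sum3}). The point that makes this feasible is that the shift operators act diagonally on monomials: if a proposed solution is written as $\phi=\prod_{i=1}^{M}t_{i}^{\delta_{i}}\sum_{m}c_{m}\prod_{i=1}^{M}t_{i}^{m_{i}}$, then $T_{s}$ multiplies the monomial $\prod_{i}t_{i}^{\delta_{i}+m_{i}}$ by $q^{\delta_{s}+m_{s}}$ and $T=\prod_{s}T_{s}$ multiplies it by $q^{\sum_{i}(\delta_{i}+m_{i})}$. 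Consequently each factor $1-a_{j}T$, $1-c_{j}q^{-1}T$, $1-b_{s}T_{s}$, $1-T_{s}$ multiplies a monomial by an explicit scalar, and collecting the coefficient of a fixed monomial in each equation turns the statement into a finite set of identities among $q$-Pochhammer ratios. I would first record these scalar factors once and for all, so that the exponents $\delta_{i}$ read off from (\ref{sol1})--(\ref{sol3}) are seen to be precisely the indicial roots that annihilate the lowest-order terms.

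For the family (\ref{sol1}) the base case $L=M$ is immediate: the prefactor is empty and $m(M)=|m|$, so (\ref{sol1}) coincides with $\mathcal{F}_{N,M}$, which satisfies $E_{N,M}$ by the Proposition recalled above giving (\ref{eqn1}) and (\ref{eqn2}). For general $L$ I would obtain (\ref{sol1}) from the case $L=M$ by the reflection $t_{i}\mapsto q/(b_{i}t_{i})$ applied in each variable with $i>L$: this reflection reverses the shift, $T_{i}\mapsto T_{i}^{-1}$, hence flips the summation index as $m_{i}\mapsto -m_{i}$, and after absorbing the monomial factor $t_{i}^{-\beta_{i}}$ it is exactly this sign reversal that converts $|m|$ into $m(L)=\sum_{i\le L}m_{i}-\sum_{i>L}m_{i}$ and the parameters $a_{j},c_{j}$ into $a_{j}/(b_{L+1}\cdots b_{M})$, $c_{j}/(b_{L+1}\cdots b_{M})$. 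Checking that the reflected operators again have the shape (\ref{eqn1}), (\ref{eqn2}) with the original parameters is the content of this step; equivalently, one may simply verify the coefficient recurrence for the explicit $c_{m}$ of (\ref{sum1}) directly.

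The solutions (\ref{sol2}) and (\ref{sol3}) carry one further distinguished variable $t_{l}$ whose exponent is the second indicial root of the $t_{l}$-equation in (\ref{eqn1}), namely $1+\sum_{i>l}\beta_{i}-\gamma_{k}$ for (\ref{sol2}) and $-\alpha_{k}+\sum_{i>l}\beta_{i}$ for (\ref{sol3}), labelled by the parameter index $k$. These are built from a (\ref{sol1})-type series by the one-variable transformation that underlies the Thomae--Watson formula (\ref{wat1}) applied in $t_{l}$; this transformation is responsible for the shifted Pochhammer parameters appearing in (\ref{sum2}) and (\ref{sum3}), namely the factors $(qa_{k}/c_{j})_{m_{L+1}}$, $(qa_{k}/a_{j})_{m_{L+1}}$ for $\mathcal{F}_{N,M}^{L;k,l}$ and $(qa_{j}/c_{k})_{m_{L}}$, $(qc_{j}/c_{k})_{m_{L}}$ for $\mathcal{G}_{N,M}^{L;k,l'}$. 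I would again reduce verification to the coefficient recurrence, now with these shifted parameters and the combined indices $m(l)$ and $m(l'-1)$ of (\ref{sum2}), (\ref{sum3}), using the same diagonal bookkeeping as before.

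The main obstacle is equation (\ref{eqn1}): because it contains the $N$-fold products $\prod_{j}(1-a_{j}T)$ and $\prod_{j}(1-c_{j}q^{-1}T)$, the recurrence it produces couples $c_{m}$ to its shift in all variables at once, through the total degree that appears in the single combined index ($m(L)$ in (\ref{sum1}), and $m_{L+1}$, $m_{L}$ together with $m(l)$, $m(l'-1)$ in (\ref{sum2}), (\ref{sum3})). The crux is to show that these $N$-fold products factor against the corresponding products of $q$-Pochhammer ratios so that the equation collapses to a single scalar identity in that combined index; once this is arranged, the remaining factors $\prod_{i}(b_{i})_{m_{i}}/(q)_{m_{i}}$ separate variable by variable and the one-variable checks become routine. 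By contrast the integrability equations (\ref{eqn2}) only couple two variables at a time and are comparatively mechanical, the one delicate point being the pairs $(r,s)$ in which one index equals the distinguished $l$, where the pivot structure of (\ref{sum2}), (\ref{sum3}) must be handled with care.
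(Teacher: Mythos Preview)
Your plan---conjugate the operators of $E_{N,M}$ by the monomial prefactor so that the shifts act diagonally, then verify the resulting coefficient recurrence term by term---is exactly what the paper does; it carries out the case of (\ref{sol1}) against (\ref{eqn1}) in full and declares the remaining checks similar. Your additional framing via the reflection $t_{i}\mapsto q/(b_{i}t_{i})$ and the one-variable Thomae--Watson transformation is good intuition for why the series (\ref{sum1})--(\ref{sum3}) have the shape they do, but note that the naive index flip $m_{i}\mapsto -m_{i}$ is not literally well defined (since $(q)_{-m}$ blows up) and the reflected operators do not reproduce $E_{N,M}$ with unchanged parameters; as you yourself concede, the rigorous content in each case is the direct coefficient check, which is the paper's route.
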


\begin{proof}
	We can check them easily.
	Here, we check that the function (\ref{sol1}) satisfies the $q$-difference equations (\ref{eqn1}).
	First, we have
	\begin{gather*}
		T_{i}t_{i}^{\alpha}=q^{\alpha}t_{i}^{\alpha}T_{i},
	\end{gather*}
	as an operator.
	Thus we obtain
	\begin{gather*}
\left(\prod_{i=L+1}^{M}t_{i}^{-\beta_{i}}\right)^{-1}\left\{t_{s}\prod_{j=1}^{N}(1-a_{j}T)\cdot(1-b_{s}T_{s})-\prod_{j=1}^{N}\big(1-c_{j}q^{-1}T\big)\cdot(1-T_{s})\right\} \prod_{i=L+1}^{M}t_{i}^{-\beta_{i}}
		\\
		=\begin{cases}
			\displaystyle
			\left\{t_{s}\prod_{j=1}^{N}\left(1-\frac{a_{j}}{b_{L+1}\cdots b_{M}}T\right)\cdot(1-b_{s}T_{s})-\prod_{j=1}^{N}\left(1-\frac{c_{j}q^{-1}}{b_{L+1}\cdots b_{M}}T\right)\cdot(1-T_{s})\right\},\\
			\hspace{11cm} 1\leq s\leq L,\\
			\displaystyle
			\left\{t_{s}\prod_{j=1}^{N}\left(1-\frac{a_{j}}{b_{L+1}\cdots b_{M}}T\right)\cdot(1-T_{s})-\prod_{j=1}^{N}\left(1-\frac{c_{j}q^{-1}}{b_{L+1}\cdots b_{M}}T\right)\cdot\left(1-\frac{1}{b_{s}}T_{s}\right)\right\},\\
			\hspace{11cm} L+1\leq s\leq M,
		\end{cases}
	\end{gather*}
	as an operator.
	For $1\leq s\leq L$, we have
	\begin{gather*}
		\nonumber t_{s}\prod_{j=1}^{N}\left(1-\frac{a_{j}}{b_{L+1}\cdots b_{M}}T\right)\cdot(1-b_{s}T_{s})\cdot\mathcal{F}_{N,M}^{L}\left(
		\begin{matrix}
			\{a_{j}\}_{1\leq j\leq N},\{b_{i}\}_{1\leq i\leq M}\\
			\{c_{j}\}_{1\leq j\leq N}	
		\end{matrix};\{t_{i}\}_{1\leq i\leq M}\right)\\
		\nonumber
 =t_{s}\prod_{j=1}^{N}\left(1-\frac{a_{j}}{b_{L+1}\cdots b_{M}}T\right)\cdot(1-b_{s}T_{s})\\
		\qquad{} \times\sum_{m_{1},\dots,m_{M}\geq0}\prod_{j=1}^{N}\frac{(a_{j}/b_{L+1}\cdots b_{M})_{m(L)}}{(c_{j}/b_{L+1}\cdots b_{M})_{m(L)}}\prod_{i=1}^{M}\frac{(b_{i})_{m_{i}}}{(q)_{m_{i}}}\prod_{i=1}^{L}t_{i}^{m_{i}}\prod_{i=L+1}^{M}\left(\frac{q}{b_{i}t_{i}}\right)^{m_{i}}\\
		\nonumber=\sum_{m_{1},\dots,m_{M}\geq 0}t_{s}\prod_{j=1}^{N}\left(1-\frac{a_{j}q^{m(L)}}{b_{L+1}\cdots b_{M}}\right)\cdot\big(1-b_{s}q^{m_{s}}\big)\\
		\qquad{} \times\prod_{j=1}^{N}\frac{(a_{j}/b_{L+1}\cdots b_{M})_{m(L)}}{(c_{j}/b_{L+1}\cdots b_{M})_{m(L)}}\prod_{i=1}^{M}\frac{(b_{i})_{m_{i}}}{(q)_{m_{i}}}\prod_{i=1}^{L}t_{i}^{m_{i}}\prod_{i=L+1}^{M}\left(\frac{q}{b_{i}t_{i}}\right)^{m_{i}}\\
		\nonumber=\sum_{m_{1},\dots,m_{M}\geq0}\prod_{j=1}^{N}\frac{(a_{j}/b_{L+1}\cdots b_{M})_{m(L)+1}}{(c_{j}/b_{L+1}\cdots b_{M})_{m(L)}}\prod_{\substack{1\leq i\leq M\\i\neq s}}\frac{(b_{i})_{m_{i}}}{(q)_{m_{i}}}\nonumber\\
\qquad{}\times \frac{(b_{s})_{m_{s}+1}}{(q)_{m_{s}}}\prod_{\substack{1\leq i\leq L\\ i\neq s}}t_{i}^{m_{i}}\cdot t_{s}^{m_{s}+1}\prod_{i=L+1}^{M}\left(\frac{q}{b_{i}t_{i}}\right)^{m_{i}}\\
		\nonumber=\sum_{\substack{m_{1},\dots,m_{s-1},m_{s+1},\dots,m_{M}\geq0}}\sum_{m_{s}\geq-1}\Biggl\{\prod_{j=1}^{N}\frac{(a_{j}/b_{L+1}\cdots b_{M})_{m(L)+1}}{(c_{j}/b_{L+1}\cdots b_{M})_{m(L)+1}}\prod_{j=1}^{N}\left(1-\frac{c_{j}q^{-1}q^{m(L)+1}}{b_{L+1}\cdots b_{M}}\right)\\
		\nonumber\qquad\times\prod_{\substack{1\leq i\leq M\\i\neq s}}\frac{(b_{i})_{m_{i}}}{(q)_{m_{i}}}\cdot\frac{(b_{s})_{m_{s}+1}}{(q)_{m_{s}+1}}\big(1-q^{m_{s}+1}\big)\prod_{\substack{1\leq i\leq L\\ i\neq s}}t_{i}^{m_{i}}\cdot t_{s}^{m_{s}+1}\prod_{i=L+1}^{M}\left(\frac{q}{b_{i}t_{i}}\right)^{m_{i}}\Biggr\}\\
		\nonumber=\sum_{m_{1},\dots,m_{M}\geq 0}\prod_{j=1}^{N}\left(1-\frac{c_{j}q^{-1}q^{m(L)}}{b_{L+1}\cdots b_{M}}\right)\cdot\big(1-q^{m_{s}}\big)\\
		\qquad{}\times\prod_{j=1}^{N}\frac{(a_{j}/b_{L+1}\cdots b_{M})_{m(L)}}{(c_{j}/b_{L+1}\cdots b_{M})_{m(L)}}\prod_{i=1}^{M}\frac{(b_{i})_{m_{i}}}{(q)_{m_{i}}}\prod_{i=1}^{L}t_{i}^{m_{i}}\prod_{i=L+1}^{M}\left(\frac{q}{b_{i}t_{i}}\right)^{m_{i}}\\
		=\prod_{n=1}^{N}\left(1-\frac{c_{n}q^{-1}T}{b_{L+1}\cdots b_{M}}\right)\cdot(1-T_{s})\cdot\mathcal{F}_{N,M}^{L}\left(
		\begin{matrix}
			\{a_{j}\}_{1\leq j\leq N},\{b_{i}\}_{1\leq i\leq M}\\
			\{c_{j}\}_{1\leq j\leq N}	
		\end{matrix};\{t_{i}\}_{1\leq i\leq M}\right).
	\end{gather*}
	For $L+1\leq s\leq M$, we have
	\begin{gather*}
		\nonumber t_{s}\prod_{j=1}^{N}\left(1-\frac{a_{j}}{b_{L+1}\cdots b_{M}}T\right)\cdot(1-T_{s})\cdot\mathcal{F}_{N,M}^{L}\left(
		\begin{matrix}
			\{a_{j}\}_{1\leq j\leq N},\{b_{i}\}_{1\leq i\leq M}\\
			\{c_{j}\}_{1\leq j\leq N}	
		\end{matrix};\{t_{i}\}_{1\leq i\leq M}\right)\\
		\nonumber =\sum_{m_{1},\dots,m_{M}\geq0}t_{s}\prod_{j=1}^{N}\left(1-\frac{a_{j}q^{m(L)}}{b_{L+1}\cdots b_{M}}\right)\cdot\big(1-q^{-m_{s}}\big)\\
		\qquad{}\times\prod_{j=1}^{N}\frac{(a_{j}/b_{L+1}\cdots b_{M})_{m(L)}}{(c_{j}/b_{L+1}\cdots b_{M})_{m(L)}}\prod_{i=1}^{M}\frac{(b_{i})_{m_{i}}}{(q)_{m_{i}}}\prod_{i=1}^{L}t_{i}^{m_{i}}\prod_{i=L+1}^{M}\left(\frac{q}{b_{i}t_{i}}\right)^{m_{i}}\\
		\nonumber =\sum_{{m_{1},\dots,m_{s-1},m_{s+1},\dots,m_{M}\geq0}}\sum_{m_{s}\geq1}\Biggl\{\prod_{j=1}^{N}\frac{(a_{j}/b_{L+1}\cdots b_{M})_{m(L)+1}}{(c_{j}/b_{L+1}\cdots b_{M})_{m(L)+1}}\prod_{j=1}^{N}\left(1-\frac{c_{j}q^{-1}q^{m(L)+1}}{b_{L+1}\cdots b_{M}}\right)\\
		\qquad{}\times\prod_{\substack{1\leq i\leq M\\i\neq s}}\! \frac{(b_{i})_{m_{i}}}{(q)_{m_{i}}}\cdot\frac{(b_{s})_{m_{s}-1}}{(q)_{m_{s}-1}}\cdot\left(1-\frac{q^{-m_{s}+1}}{b_{s}}\right)\prod_{i=1}^{L}t_{i}^{m_{i}}\! \prod_{\substack{L+1\leq i\leq M\\i\neq s}}\! \left(\frac{q}{b_{i}t_{i}}\right)^{m_{i}}\cdot\left(\frac{q}{b_{s}t_{s}}\right)^{m_{s}-1}\Biggr\}\\
		\nonumber =\sum_{m_{1},\dots,m_{M}\geq0}\prod_{j=1}^{N}\left(1-\frac{c_{j}q^{-1}q^{m(L)}}{b_{L+1}\cdots b_{M}}\right)\cdot\left(1-\frac{q^{-m_{s}}}{b_{s}}\right)\\
		\qquad{}\times\prod_{j=1}^{N}\frac{(a_{j}/b_{L+1}\cdots b_{M})_{m(L)}}{(c_{j}/b_{L+1}\cdots b_{M})_{m(L)}}\prod_{i=1}^{M}\frac{(b_{i})_{m_{i}}}{(q)_{m_{i}}}\prod_{i=1}^{L}t_{i}^{m_{i}}\prod_{i=L+1}^{M}\left(\frac{q}{b_{i}t_{i}}\right)^{m_{i}}\\
		=\prod_{j=1}^{N}\left(1-\frac{c_{j}q^{-1}}{b_{L+1}\cdots b_{M}}T\right)\cdot\left(1-\frac{1}{b_{s}}T_{s}\right)\cdot\mathcal{F}_{N,M}^{L}\left(
		\begin{matrix}
			\{a_{j}\}_{1\leq j\leq N},\{b_{i}\}_{1\leq i\leq M}\\
			\{c_{j}\}_{1\leq j\leq N}	
		\end{matrix};\{t_{i}\}_{1\leq i\leq M}\right).
	\end{gather*}
	Similar to these calculations, we can check Proposition \ref{prop3} directly.
\end{proof}

\begin{Remark}\label{remarkGKZ}
	If $N=1$, the solutions are $q$-analogs of the solutions of the differential equation related with Lauricella function $F_{D}$, which was obtained by Gelfand--Kapranov--Zelevinsky \cite{GKZ}.
	More precisely, the function (\ref{sum1}), (\ref{sum2}), (\ref{sum3}) are $q$-analogs of the function
	\begin{gather*}
		F_{D,j}\left(\begin{matrix}
			\tilde{\alpha};\{\tilde{\beta}_{i}\}_{1\leq i\leq M}\\
			\tilde{\gamma}
		\end{matrix};\{x_{i}\}_{1\leq i\leq M}\right)=\sum_{m_{1},\dots,m_{M}\geq0}\frac{(\tilde{\alpha})_{-m(j-1)}}{(\tilde{\gamma})_{-m(j-1)}} \prod_{i=1}^{M}\frac{(\tilde{\beta}_{i})_{m_{i}}}{(1)_{m_{i}}}\prod_{i=1}^{M}x_{i}^{m_{i}},
	\end{gather*}
	by replacing some parameters and transforming variables.
	Only here, $(\tilde{\alpha})_{n}=\Gamma(\tilde{\alpha}+n)/\Gamma(\tilde{\alpha})$ for $n\in\mathbb{Z}$.
	The solutions of the equation related with $F_{D}$ are given by using the function $F_{D,j}$.
\end{Remark}
\begin{Remark}\label{remarkexp}
	For $0\leq L\leq M$, the functions (\ref{sol1}), (\ref{sol2}), (\ref{sol3}) converge in the region
	\begin{gather*}
		D=\left\{\!|t_{i}|<1, \, 1\leq i\leq L,\, \left|\prod_{j=1}^{N}\frac{c_{j}}{a_{j}}\cdot\frac{q}{b_{i}t_{i}}\right|<1,\, L+1\leq i\leq M,\, \left|\frac{qt_{i}}{b_{j}t_{j}}\right|<1,\, 1\leq i<j\leq M\!\right\},
	\end{gather*}
	simultaneously.
	We put $x_{i}=t_{i}/t_{i+1}$, $1\leq i<L$, $x_{L}=t_{L}$, $x_{L+1}=1/t_{L+1}$, $x_{i}=t_{i-1}/t_{i}$, $L+1<i\leq M$, then we have $t_{i}=x_{i}\cdots x_{L}$, $1\leq i\leq L$, $1/t_{i}=x_{L+1}\cdots x_{i}$, $L+1\leq i\leq M$ and
	\begin{gather*}
		\frac{t_{i}}{t_{j}}=
		\begin{cases}
			x_{i}\cdots x_{j-1}, &1\leq i<j\leq L,\\
			x_{i}\cdots x_{j}, &1\leq i\leq L< j\leq M,\\
			x_{i+1}\cdots x_{j}, &L<i<j\leq M.
		\end{cases}
	\end{gather*}
	Therefore the functions (\ref{sol1}), (\ref{sol2}), (\ref{sol3}) are solutions of the system $E_{N,M}$ in the region \mbox{$\{|x_{i}|\ll 1, \, 1\leq i\leq M\}$}, and have the following asymptotic behavior at $x=(0,\dots,0)$:
	\begin{gather*}
		\prod_{i=L+1}^{M}t_{i}^{-\beta_{i}}
		\cdot\mathcal{F}_{N,M}^{L}\left(
		\begin{matrix}
			\{a_{j}\}_{1\leq j\leq N},\{b_{i}\}_{1\leq i\leq M}\\
			\{c_{j}\}_{1\leq j\leq N}	
		\end{matrix};\{t_{i}\}_{1\leq i\leq M}\right)=\prod_{i=L+1}^{M}t_{i}^{-\beta_{i}}\cdot(1+O(||x||)),\\
		t_{l}^{1+\sum_{i=l+1}^{M}\beta_{i}-\gamma_{k}}
		\prod_{i=l+1}^{M}t_{i}^{-\beta_{i}}
		\cdot\mathcal{G}_{N,M}^{L;k,l}\left(
		\begin{matrix}
			\{a_{j}\}_{1\leq j\leq N},\{b_{i}\}_{1\leq i\leq M}\\
			\{c_{j}\}_{1\leq j\leq N}	
		\end{matrix};\{t_{i}\}_{1\leq i\leq M}\right)\\
	\qquad{}	=t_{l}^{1+\sum_{i=l+1}^{M}\beta_{i}-\gamma_{k}}
		\prod_{i=l+1}^{M}t_{i}^{-\beta_{i}}\cdot(1+O(||x||)),\\
		t_{l}^{-\alpha_{k}+\sum_{i=l+1}^{M}\beta_{i}}
		\prod_{i=l+1}^{M}t_{i}^{-\beta_{i}}
		\cdot\mathcal{F}_{N,M}^{L;k,l}\left(
		\begin{matrix}
			\{a_{j}\}_{1\leq j\leq N},\{b_{i}\}_{1\leq i\leq M}\\
			\{c_{j}\}_{1\leq j\leq N}	
		\end{matrix};\{t_{i}\}_{1\leq i\leq M}\right)\\
		\qquad{} =t_{l}^{-\alpha_{k}+\sum_{i=l+1}^{M}\beta_{i}}
		\prod_{i=l+1}^{M}t_{i}^{-\beta_{i}}\cdot(1+O(||x||)),
	\end{gather*}
	where $||x||=\sum_{i=1}^{M}|x_{i}|$.
	The system $E_{N,M}$ is rewritten as follows:
	\begin{gather*}
		\begin{cases}
			\displaystyle\left\{\frac{t_{s}}{t_{s+1}}\cdots\frac{t_{L-1}}{t_{L}}t_{L} \prod_{j=1}^{N}(1-a_{j}T)\cdot(1-b_{s}T_{s})-\prod_{j=1}^{N}\big(1-c_{j}q^{-1}T\big)\cdot(1-T_{s})\right\}\mathcal{F}=0,\\
			\hspace{11cm} 1\leq s\leq L,\\
			\displaystyle\left\{ \prod_{j=1}^{N}(1-a_{j}T)\cdot(1-b_{s}T_{s})-\frac{1}{t_{L+1}}\frac{t_{L+1}}{t_{L+2}}\cdots\frac{t_{s-1}}{t_{s}}\prod_{j=1}^{N}\big(1-c_{j}q^{-1}T\big)\cdot(1-T_{s})\right\}\mathcal{F}=0, \\
			\hspace{11cm} L+1\leq s\leq M,
		\end{cases}\\
\left\{\frac{t_{r}}{t_{s}}(1-b_{r}T_{r})(1-T_{s})-(1-b_{s}T_{s})(1-T_{r})\right\}\mathcal{F}=0,\qquad 1\leq r<s\leq M.
	\end{gather*}
	If a function $f(t)$, which is defined in the region $D$, is a solution of $E_{N,M}$ and has asymptotic behavior
	\begin{gather*}
		f(t)=t_{1}^{\delta_{1}}\cdots t_{M}^{\delta_{M}}(1+O(||x||))
	\end{gather*}
	at $x=(0,\dots,0)$.
	Then, by checking the coefficient of the lowest power of $x$, we find that $\delta=(\delta_{1},\dots,\delta_{M})$ must satisfy
	\begin{gather*}
		\begin{cases}
			\displaystyle\prod_{j=1}^{N}\big(1-c_{j}q^{-1}q^{\delta_{1}+\cdots+\delta_{M}}\big)\big(1-q^{\delta_{s}}\big)=0, & 1\leq s\leq L,\\
			\displaystyle\prod_{j=1}^{N}\big(1-a_{j}q^{\delta_{1}+\cdots+\delta_{M}}\big)\big(1-b_{s}q^{\delta_{s}}\big)=0, &L<s\leq M,
		\end{cases}\\
		\big(1-b_{s}q^{\delta_{s}}\big)\big(1-q^{\delta_{r}}\big)=0, \qquad 1\leq r<s\leq M.
	\end{gather*}
	In this sense, the solution $(\delta_{1},\dots,\delta_{M})$ of the above equations should be called ``the characteristic exponent of the system $E_{N,M}$ at $x=(0,\dots,0)$'', like a characteristic exponent in one variable case.
	Solving those equations, we have
	\begin{gather}\label{CE1}
 (\delta_{1},\dots,\delta_{M}) =(0,\dots,0,-\beta_{L+1},\dots,-\beta_{M}),\\
		\label{CE2}\begin{cases}
			\displaystyle \left(1+\sum_{i=2}^{M}\beta_{i}-\gamma_{k},-\beta_{2},\dots,-\beta_{M}\right),\\
			\displaystyle \left(0,1+\sum_{i=3}^{M}\beta_{i}-\gamma_{k},-\beta_{3},\dots,-\beta_{M}\right),\\
			\cdots\cdots\cdots\cdots\cdots\cdots\cdots\cdots\cdots\cdots\cdots\cdots\\
			\displaystyle\left(0,\dots,0,1+\sum_{i=L+1}^{M}\beta_{i}-\gamma_{k},-\beta_{L+1},\dots,-\beta_{M}\right),
		\end{cases}\\
		\label{CE3}\begin{cases}
			\displaystyle \left(0,\dots,0,-\alpha_{k}+\sum_{i=L+2}^{M}\beta_{i},-\beta_{L+2},\dots,-\beta_{M}\right),\\
			\displaystyle \left(0,\dots,0,-\alpha_{k}+\sum_{i=L+3}^{M}\beta_{i},-\beta_{L+3},\dots,-\beta_{M}\right),\\
\cdots\cdots\cdots\cdots\cdots\cdots\cdots\cdots\cdots\cdots\cdots\cdots\\
			(0,\dots,0,-\alpha_{k}),
		\end{cases}
	\end{gather}
	where $1\leq k\leq N$.
	The solutions (\ref{sol1}), (\ref{sol2}), (\ref{sol3}) correspond to ``the characteristic exponents'' (\ref{CE1}), (\ref{CE2}), (\ref{CE3}), respectively.
	Also the solutions (\ref{sol1}), (\ref{sol2}), (\ref{sol3}) are characterized by the asymptotic behavior
	\begin{gather*}
		\prod_{i=L+1}^{M}t_{i}^{-\beta_{i}}(1+O(||x||)),\qquad
		t_{l}^{1+\sum_{i=l+1}^{M}\beta_{i}-\gamma_{k}}
		\prod_{i=l+1}^{M}t_{i}^{-\beta_{i}}(1+O(||x||)),\\
		t_{l}^{-\alpha_{k}+\sum_{i=l+1}^{M}\beta_{i}}
		\prod_{i=l+1}^{M}t_{i}^{-\beta_{i}}(1+O(||x||)),
	\end{gather*}
	respectively at $x=(t_{1}/t_{2},\dots,t_{L-1}/t_{L},t_{L},1/t_{L+1},t_{L+1}/t_{L+2},\dots,t_{M-1}/t_{M})=(0,\dots,0)$.
\end{Remark}
We should check the linearly independence of the solutions (\ref{sol1}), (\ref{sol2}), (\ref{sol3}). The following lemma is useful to check the linearly independence.
\begin{Lemma}\label{lem3}
	For any $i\neq j$, we assume $\delta_{i}\neq \delta_{j}$.
	Here, $\delta_{i}=(\delta_{1,i},\dots,\delta_{M,i})$ and $\delta_{k,i}\in\mathbb{C}$ $1\leq i\leq n$, $1\leq k\leq M$.
	Then the functions
	\begin{gather*}
		f_{i}(t_{1},\dots,t_{M})=t^{\delta_{i}}(1+O(||t||))
	\end{gather*}
	are linearly independent on $K=\{C(t);\, \mbox{for any }i,\, T_{i}C(t)=C(t)\}$.
	Here, $t^{\delta_{i}}=t_{1}^{\delta_{1,i}}\cdots t_{M}^{\delta_{M,i}}$.
\end{Lemma}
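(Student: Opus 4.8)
The plan is to argue by contradiction via a minimal counterexample, working over the ring $K$ of $q$-pseudoconstants. Suppose the $f_i$ are $K$-linearly dependent and choose a nontrivial relation $\sum_{i\in I}C_i f_i=0$ with $C_i\in K$, all $C_i\neq 0$, and $|I|$ as small as possible. Since each $f_i=t^{\delta_i}(1+O(\|t\|))$ is a nonzero series, $|I|\geq 2$. The goal is to manufacture from this relation a shorter nontrivial one, contradicting minimality.

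The main tool is the family of shift operators $T_k$. By the very definition of $K$ one has $T_kC_i=C_i$, so the $T_k$ are $K$-linear, while on the leading monomial they act diagonally, $T_k t^{\delta_i}=q^{\delta_{k,i}}t^{\delta_i}$. First I would fix a total order on exponents given by a real linear functional that is strictly positive on $\mathbb{Z}_{\geq 0}^M\setminus\{0\}$; with respect to it each $f_i$ has a unique lowest term $t^{\delta_i}$ of coefficient $1$, and $T_k f_i=q^{\delta_{k,i}}t^{\delta_i}(1+O(\|t\|))$ again has lowest term proportional to $t^{\delta_i}$. Choosing two indices $i_0,i_1\in I$ and a coordinate $k$ with $q^{\delta_{k,i_0}}\neq q^{\delta_{k,i_1}}$, I would apply the $K$-linear operator $T_k-q^{\delta_{k,i_0}}$ to the relation. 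This annihilates the lowest term carried by $f_{i_0}$, since its coefficient becomes $q^{\delta_{k,i_0}}-q^{\delta_{k,i_0}}=0$, while the contribution of $f_{i_1}$ survives with nonzero leading coefficient $q^{\delta_{k,i_1}}-q^{\delta_{k,i_0}}$. Iterating such $K$-linear shift operations over the coordinates, equivalently exhibiting a nonvanishing Casoratian $\det\big(T^{w_j}f_i\big)$ for suitable monomial shift operators $T^{w}=\prod_k T_k^{w_k}$, eliminates one function entirely and yields a nontrivial relation among fewer than $|I|$ of the $f_i$, the desired contradiction.

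The step I expect to be the main obstacle is reconciling the non-scalar nature of the coefficients $C_i$ with the comparison of leading terms. Unlike ordinary constants, a $q$-pseudoconstant can contain period-lattice factors, that is powers $t^{2\pi\sqrt{-1}\,n/\log q}$ with $n\in\mathbb{Z}^M$, which shift the apparent exponent support of $C_i f_i$ and may make it overlap that of $C_j f_j$ even when $\delta_i\neq\delta_j$. Consequently the elimination above does not merely use $\delta_{i_0}\neq\delta_{i_1}$: it requires that the shift-character vectors $q^{\delta_i}=\big(q^{\delta_{1,i}},\dots,q^{\delta_{M,i}}\big)$ be pairwise distinct, equivalently that the generalized Vandermonde determinant $\det\big(q^{\langle w_j,\delta_i\rangle}\big)$ built from the shift words $w_j$ not vanish. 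I would therefore phrase the argument so that this character-distinctness is what is actually invoked; in the intended application it holds automatically, since the genericity assumptions \eqref{COND3} force the $q$-powers of the differences of the characteristic exponents \eqref{CE1}--\eqref{CE3} to avoid $q^{\mathbb{Z}}$, so that distinct exponents indeed give distinct characters and the lemma applies.
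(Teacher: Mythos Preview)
Your Casoratian idea is essentially the paper's argument, and your second paragraph correctly spots a subtlety the paper glosses over. Two points of comparison are worth noting.

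First, the paper does not iterate coordinate shifts $T_k-q^{\delta_{k,i_0}}$; it picks a \emph{single} monomial shift $R=T_1^{m_1}\cdots T_M^{m_M}$ with $m\in\mathbb{Z}^M$ chosen so that the scalars $q^{(m,\delta_i)}$ are pairwise distinct, and then forms the Casoratian with rows $R^0,R^1,\dots,R^{n-1}$. This factors as $\prod_i C_i\cdot t^{\sum_i\delta_i}$ times the ordinary Vandermonde in the $q^{(m,\delta_i)}$, which is nonzero; since the relation forces the determinant to vanish, some $C_i=0$, and induction finishes. Your ``generalized Vandermonde'' $\det\big(T^{w_j}f_i\big)$ with several shift words $w_j$ is a valid variant, but the single-$R$ choice makes the nonvanishing transparent.

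Second, your iterative-elimination phrasing has a genuine slip: applying $T_k-q^{\delta_{k,i_0}}$ to $\sum_{i\in I}C_if_i=0$ does \emph{not} ``eliminate $f_{i_0}$ entirely'' nor produce a relation among fewer of the original $f_i$. It only kills the leading monomial of the $i_0$-th term; the resulting identity is $\sum_{i\in I}C_i\,h_i=0$ with $h_i=(T_k-q^{\delta_{k,i_0}})f_i$, and $h_{i_0}$ is a nonzero higher-order series, so you still have $|I|$ summands and cannot invoke minimality. The correct conclusion from the Casoratian (which you also state) is not a shorter relation but rather that its leading coefficient forces some $C_i=0$; that is the mechanism the paper uses. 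Your observation that one really needs the characters $\big(q^{\delta_{1,i}},\dots,q^{\delta_{M,i}}\big)$ to be pairwise distinct, not merely the $\delta_i$, is exactly right: the paper tacitly passes from ``$(m,\delta_i)\neq(m,\delta_j)$'' to ``$q^{(m,\delta_i)}\neq q^{(m,\delta_j)}$'', and in the intended application the genericity conditions \eqref{COND3} guarantee this.
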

\begin{proof}
	We assume that $C_{1}(t)f_{1}(t)+\cdots+C_{n}(t)f_{n}(t)=0$, where $C_{i}(t)\in K$, $1\leq i\leq n$.
	We can take $m=(m_{1},\dots,m_{M})\in\mathbb{Z}^{M}$ such that $(m,\delta_{i})\neq (m,\delta_{j})$ for any $i\neq j$, where $(m,\delta_{i})=m_{1}\delta_{1,i}+\cdots+m_{M}\delta_{M,i}$.
	We define the operator $R=T_{1}^{m_{1}}\cdots T_{M}^{m_{M}}$ for such $m$ and consider the following determinant
	\begin{gather*}
		D=\begin{vmatrix}
			C_{1}(t)f_{1}(t)& C_{2}(t)f_{2}(t)& C_{3}(t)f_{3}(t)&\cdots & C_{n}f_{n}(t)\\
			R(C_{1}(t)f_{1}(t)) &R(C_{2}(t)f_{2}(t)) &R(C_{3}(t)f_{3}(t))& \cdots & R(C_{n}(t)f_{n}(t))\\
			R^{2}(C_{1}(t)f_{1}(t)) &R^{2}(C_{2}(t)f_{2}(t)) &R^{2}(C_{3}(t)f_{3}(t))& \cdots & R^{2}(C_{n}(t)f_{n}(t))\\
			\vdots &\vdots & & &\vdots\\
			R^{n-1}(C_{1}(t)f_{1}(t)) &R^{n-1}(C_{2}(t)f_{2}(t)) &R^{n-1}(C_{3}(t)f_{3}(t))& \cdots & R^{n-1}(C_{n}(t)f_{n}(t))
		\end{vmatrix}.
	\end{gather*}
	By definition of $R$ and $C_{i}(t)$, we have $R^{k}(C_{i}(t)f_{i}(t))=C_{i}(t)R^{k}(f_{i}(t))$.
	Also, we have $R^{k}(f_{i}(t))=q^{(m,\delta_{i})k}t^{\delta_{i}}(1+O(||t||))$ by definition of $f_{i}(t)$.
	Thus we have
	\begin{gather*}
		D=C_{1}(t)\cdots C_{M}(t) t^{\delta_{1}}\cdots t^{\delta_{n}}\\
	\hphantom{D=}{}\times\begin{vmatrix}
			1 & 1 & 1 & \cdots & 1\\
			q^{(m,\delta_{1})} & q^{(m,\delta_{2})} &q^{(m,\delta_{3})} & \cdots & q^{(m,\delta_{n})}\\
			\big(q^{(m,\delta_{1})}\big)^{2} & \big(q^{(m,\delta_{2})}\big)^{2} &\big(q^{(m,\delta_{3})}\big)^{2} & \cdots & \big(q^{(m,\delta_{n})}\big)^{2}\\
			\vdots & \vdots &\vdots & &\vdots\\
			\big(q^{(m,\delta_{1})}\big)^{n-1} & \big(q^{(m,\delta_{2})}\big)^{n-1} &\big(q^{(m,\delta_{3})}\big)^{n-1} & \cdots & \big(q^{(m,\delta_{n})}\big)^{n-1}
		\end{vmatrix}(1+O(||t||)).
	\end{gather*}
	By the assumption $C_{1}(t)f_{1}(t)+\cdots+C_{n}(t)f_{n}(t)=0$, we have $D=0$.
	Also, by the condition $(m,\delta_{i})\neq (m,\delta_{j})$ $(i\neq j)$, the Vandermonde determinant is not $0$.
	Therefore we have that there exists $i$ such that $C_{i}(t)=0$.
	By induction on $n$, we have $C_{1}(t)=\cdots=C_{n}(t)=0$.
\end{proof}

By Lemma \ref{lem3}, we have that solutions (\ref{sol1}), (\ref{sol2}) (\ref{sol3}) are linearly independent on $K$ if parameters $\{a_{j}\}_{1\leq j\leq N}$, $\{b_{i}\}_{1\leq i\leq M}$, $\{c_{j}\}_{1\leq j\leq N}$ satisfy the condition~(\ref{COND3}).
Also, if a function $f\left(
\begin{matrix}
	\{a_{j}\}_{1\leq j\leq N},\{b_{i}\}_{1\leq i\leq M}\\
	\{c_{j}\}_{1\leq j\leq N}	
\end{matrix};\{t_{i}\}_{1\leq i\leq M}\right)$ satisfies the $q$-difference system $E_{N,M}$, then the function $f\left(
\begin{matrix}
	\{a_{j}\}_{1\leq j\leq N},\{b_{\sigma(i)}\}_{1\leq i\leq M}\\
	\{c_{j}\}_{1\leq j\leq N}	
\end{matrix};\{t_{\sigma(i)}\}_{1\leq i\leq M}\right)$ satisfies the same system for $\sigma\in\mathfrak{S}_{M}$.
Therefore we have the following proposition.
\begin{Proposition}	\label{prop4}
	For $0\leq L\leq M$ and $\sigma\in\mathfrak{S}_{M}$, we set
	\begin{gather*}
		u_{0}^{L,\sigma}=\prod_{i=L+1}^{M}t_{\sigma_{i}}^{-\beta_{\sigma_{i}}}
		\cdot\mathcal{F}_{N,M}^{L}\left(
		\begin{matrix}
			\{a_{j}\}_{1\leq j\leq N},\{b_{\sigma(i)}\}_{1\leq i\leq M}\\
			\{c_{j}\}_{1\leq j\leq N}	
		\end{matrix};\{t_{\sigma(i)}\}_{1\leq i\leq M}\right),\\
		u_{k,l}^{L,\sigma}=\begin{cases}
			\displaystyle t_{\sigma_{l}}^{1+\sum_{i=l+1}^{M}\beta_{\sigma_{i}}-\gamma_{k}}
			\prod_{i=l+1}^{M}t_{\sigma_{i}}^{-\beta_{\sigma_{i}}}
			\cdot\mathcal{G}_{N,M}^{L;k,l}\left(
			\begin{matrix}
				\{a_{j}\}_{1\leq j\leq N},\{b_{\sigma(i)}\}_{1\leq i\leq M}\\
				\{c_{j}\}_{1\leq j\leq N}	
			\end{matrix};\{t_{\sigma(i)}\}_{1\leq i\leq M}\right),\\
			\hspace{8cm} 1\leq k\leq N,\ 1\leq l\leq L,\\
			\displaystyle t_{\sigma_{l}}^{-\alpha_{k}+\sum_{i=l+1}^{M}\beta_{\sigma_{i}}}
			\prod_{i=l+1}^{M}t_{\sigma_{i}}^{-\beta_{\sigma_{i}}}
			\cdot\mathcal{F}_{N,M}^{L;k,l}\left(
			\begin{matrix}
				\{a_{j}\}_{1\leq j\leq N},\{b_{\sigma(i)}\}_{1\leq i\leq M}\\
				\{c_{j}\}_{1\leq j\leq N}	
			\end{matrix};\{t_{\sigma(i)}\}_{1\leq i\leq M}\right),\\
			\hspace{8cm} 1\leq k\leq N,\ L+1\leq l\leq M.
		\end{cases}
	\end{gather*}
	We set
	\begin{gather*}
		{\bm{u}}^{L,\sigma}={}^{\rm T}\big(u_{0}^{L,\sigma},u_{1,1}^{L,\sigma},\dots,u_{1,M}^{L,\sigma},u_{2,1}^{L,\sigma},\dots,u_{N,M}^{L,\sigma}\big).
	\end{gather*}
	Then ${\bm{u}}^{L,\sigma}$ is a fundamental solution of the $q$-difference system $E_{N,M}$ in the region $D^{L,\sigma}$ if parameters $\{a_{j}\}_{1\leq j\leq N}$, $\{b_{i}\}_{1\leq i\leq M}$, $\{c_{j}\}_{1\leq j\leq N}$ satisfy the condition
	\begin{gather}
		\label{CONDALL}a_{j}/a_{k}, c_{j}/c_{k}, a_{j}/b_{\sigma(i)}\cdots b_{\sigma(M)}, c_{k}/b_{\sigma(i)}\cdots b_{\sigma(M)}\notin q^{\mathbb{Z}},
	\end{gather}
	for $1\leq i\leq M+1$, $1\leq j\neq k\leq N$. Here,
	\begin{gather*}
		D^{L,\sigma}=\Biggl\{|t_{\sigma(i)}|<1,\, 1\leq i\leq L,\, \Bigg|\prod_{j=1}^{N}\frac{c_{j}}{a_{j}}\cdot\frac{q}{b_{\sigma(i)}t_{\sigma(i)}}\Bigg|<1,\, L+1\leq i\leq M,\\
\hphantom{D^{L,\sigma}=\Biggl\{}{}
\left|\frac{qt_{\sigma(i)}}{b_{\sigma(j)}t_{\sigma(j)}}\right|<1,\, 1\leq i< j\leq M\Biggr\}.
	\end{gather*}
\end{Proposition}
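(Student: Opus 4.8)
The plan is to assemble ingredients that are already in place: Proposition~\ref{prop3} (the functions \eqref{sol1}--\eqref{sol3} solve $E_{N,M}$), the invariance of $E_{N,M}$ under the simultaneous substitution $b_i\mapsto b_{\sigma(i)}$, $t_i\mapsto t_{\sigma(i)}$ observed just above the proposition, Lemma~\ref{lem3}, and the rank theorem. I would begin by counting the entries of ${\bm u}^{L,\sigma}$: the single function $u_0^{L,\sigma}$ together with the $u_{k,l}^{L,\sigma}$ for $1\le k\le N$, $1\le l\le M$ amount to $1+NM=NM+1$ functions, which is exactly the rank of $E_{N,M}$. Consequently it suffices to prove that (i) every entry solves $E_{N,M}$ and (ii) the $NM+1$ entries are linearly independent over $K$; a set of $NM+1$ linearly independent solutions of a system of rank $NM+1$ is automatically a fundamental system.

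Claim (i) is immediate from the permutation symmetry: since $f(\{b_i\};\{t_i\})$ solving $E_{N,M}$ forces $f(\{b_{\sigma(i)}\};\{t_{\sigma(i)}\})$ to solve it as well, each $u_0^{L,\sigma}$ and each $u_{k,l}^{L,\sigma}$ is literally the $\sigma$-substitution of the corresponding solution \eqref{sol1}--\eqref{sol3} furnished by Proposition~\ref{prop3}, hence a solution. The same substitution carries the simultaneous convergence region $D$ of Remark~\ref{remarkexp} onto $D^{L,\sigma}$, so all entries converge there.

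For (ii) I would apply Lemma~\ref{lem3} in the coordinates $x$ of Remark~\ref{remarkexp}: each entry has leading behaviour $t^{\delta}(1+O(\|x\|))$, where $\delta$ ranges over the exponents \eqref{CE1}, \eqref{CE2}, \eqref{CE3} with $\beta_i$ replaced by $\beta_{\sigma(i)}$ and the coordinate labels permuted by the single permutation $\sigma$ (a common permutation of all vectors cannot identify two distinct ones). By Lemma~\ref{lem3} it then suffices that these exponent vectors be pairwise distinct. This is a finite case analysis, matching a putative coincidence position by position: two exponents drawn from \eqref{CE3} can agree only if $a_k/a_{k'}\in q^{\mathbb Z}$ or $a_k/(b_{\sigma(i)}\cdots b_{\sigma(M)})\in q^{\mathbb Z}$; two from \eqref{CE2} only if $c_k/c_{k'}\in q^{\mathbb Z}$ or $c_k/(b_{\sigma(i)}\cdots b_{\sigma(M)})\in q^{\mathbb Z}$; and the comparisons with \eqref{CE1}, or across \eqref{CE2} and \eqref{CE3}, produce relations of these same four shapes, with the empty product $i=M+1$ supplying the bare conditions $a_j,c_k\notin q^{\mathbb Z}$ needed in the extreme cases $L=0,M$. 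Each such relation is excluded by \eqref{CONDALL}, so all exponents are distinct and Lemma~\ref{lem3} yields linear independence.

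The only real work, and the step I expect to be the main obstacle, is the exponent bookkeeping in (ii): one must check that each of the finitely many ways two of the vectors \eqref{CE1}--\eqref{CE3} could coincide---keeping careful track of where the single non-$\beta$ entry sits in each vector, which is precisely the position furnishing the mismatch---collapses to one of the four ratios forbidden by \eqref{CONDALL}. Everything else is a transparent transport of the $\sigma=\mathrm{id}$ discussion through the permutation symmetry, together with the dimension count against the rank theorem.
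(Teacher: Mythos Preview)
Your proposal is correct and follows exactly the route the paper takes: the paper's entire argument is the paragraph immediately preceding the proposition, which invokes Lemma~\ref{lem3} (with the exponents from Remark~\ref{remarkexp}) for linear independence, cites the permutation symmetry of $E_{N,M}$ to pass from $\sigma=\mathrm{id}$ to general $\sigma$, and implicitly relies on the rank theorem for the count $NM+1$. In fact you supply more detail than the paper does, since the paper does not spell out the case analysis showing the characteristic exponents are pairwise distinct under~\eqref{CONDALL}.
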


\subsection{Connection matrices}\label{secmat}
In this subsection, we take a fundamental solution $\bm{u}^{L,\sigma}$ on $D^{L,\sigma}$ and we consider the connection formula between $\bm{u}^{L_{1},\sigma_{1}}$ and $\bm{u}^{L_{2},\sigma_{2}}$, where $0\leq L_{1}, L_{2}\leq M$ and $\sigma_{1},\sigma_{2}\in\mathfrak{S}_{M}$.
We assume the condition~(\ref{CONDALL}) for $1\leq i\leq M+1$, $1\leq j\neq k\leq N$ and for any $\sigma\in\mathfrak{S}_{M}$.
We can solve this problem in principle by calculating the following matrices:
\begin{itemize}\itemsep=0pt
	\item the matrix which connects $\bm{u}^{L,\mathrm{id}}$ with $\bm{u}^{L+1,\mathrm{id}}$, $0\leq L\leq M-1$,
	\item the matrix which connects $\bm{u}^{L,\mathrm{id}}$ with $\bm{u}^{L-1,\mathrm{id}}$, $1\leq L\leq M$,
	\item the matrix which connects $\bm{u}^{M,\mathrm{id}}$ with $\bm{u}^{M,s_{r}}$, $0\leq r\leq M-1$,
\end{itemize}
where $s_{r}=(r,r+1)\in\mathfrak{S}_{M}$.
These matrices can be calculated by the Thomae--Watson's formula~\cite{W,Wat}.
\begin{Lemma}[\cite{W,Wat}]
	The connection formula of ${}_{N+1}\varphi_{N}$ is given as follows:
	\begin{gather}
		{}_{N+1}\varphi_{N}\left(
		\begin{cases}
			a_{1},\dots,a_{N+1}\\
			b_{1},\dots,b_{N}	
		\end{cases};t\right)\nonumber
		=\sum_{k=1}^{N+1}\Biggl\{\prod_{j=1}^{N}\frac{(b_{j}/a_{k})_{\infty}}{(b_{j})_{\infty}}\prod_{\substack{1\leq j\leq N+1\\j\neq k}}\frac{(a_{j})_{\infty}}{(a_{j}/a_{k})_{\infty}}\\
\qquad{} \times\frac{\theta(ta_{k})}{\theta(t)}{}_{N+1}\varphi_{N}\left(
		\begin{cases}
			\{qa_{k}/b_{j}\}_{1\leq j\leq N},\,a_{k}\\
			\{qa_{k}/a_{j}\}_{1\leq j\leq N+1,\,j\neq k}	
		\end{cases};\frac{b_{1}\cdots b_{N}q}{a_{1}\cdots a_{N+1}t}\right)\Biggr\}.\label{wat1}
	\end{gather}
\end{Lemma}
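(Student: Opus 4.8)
The plan is to view both sides of \eqref{wat1} as meromorphic functions of $t$ and to identify them as one and the same solution of a single linear $q$-difference equation, expanded in two fundamental systems localised at $t=0$ and at $t=\infty$. First I would record the order-$(N+1)$ equation satisfied by $y(t)={}_{N+1}\varphi_{N}(a_{1},\dots,a_{N+1};b_{1},\dots,b_{N};t)$. Writing $T$ for the shift $t\mapsto qt$ and comparing consecutive coefficients of the defining series gives
\begin{gather*}
\Biggl[\prod_{j=1}^{N}\big(1-b_{j}q^{-1}T\big)(1-T)-t\prod_{i=1}^{N+1}(1-a_{i}T)\Biggr]y=0,
\end{gather*}
so the left-hand side of \eqref{wat1} is annihilated by this operator by construction.

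Next I would check that each summand on the right is also a solution. Write $v_{k}(t)=\theta(ta_{k})/\theta(t)$ times the shifted series ${}_{N+1}\varphi_{N}(\{qa_{k}/b_{j}\}_{j},a_{k};\{qa_{k}/a_{j}\}_{j\neq k};b_{1}\cdots b_{N}q/(a_{1}\cdots a_{N+1}t))$, i.e.\ the $k$-th term of \eqref{wat1} with its constant prefactor stripped off. The key identity is the quasi-periodicity $\theta(qx)=-x^{-1}\theta(x)$, which yields $T(\theta(ta_{k})/\theta(t))=a_{k}^{-1}\theta(ta_{k})/\theta(t)$. Conjugating the operator above by this theta quotient therefore replaces $T$ by $a_{k}^{-1}T$; rewriting everything in the variable $u=b_{1}\cdots b_{N}q/(a_{1}\cdots a_{N+1}t)$, in which $T$ becomes the inverse shift $u\mapsto q^{-1}u$, turns the operator---after the parameter substitutions $a_{i}\mapsto qa_{k}/a_{i}$ and $b_{j}\mapsto qa_{k}/b_{j}$---into precisely the defining equation of the shifted ${}_{N+1}\varphi_{N}$ in $u$. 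This is a bookkeeping computation, but it is exactly the step that forces the particular parameter shifts appearing in \eqref{wat1}.

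The exponents of the equation at $t=\infty$ are $-\log_{q}a_{k}$, and the genericity assumption $a_{j}/a_{k}\notin q^{\mathbb{Z}}$ makes these exponents pairwise distinct modulo $\mathbb{Z}$; hence, by the Wronskian/Vandermonde argument already used in Lemma~\ref{lem3} (with $M=1$), the functions $v_{1},\dots,v_{N+1}$ are linearly independent over the field $\mathcal{K}$ of $T$-invariant functions and form a fundamental system of solutions near infinity. Since the left-hand side of \eqref{wat1} is a solution as well, it must equal $\sum_{k=1}^{N+1}C_{k}(t)v_{k}(t)$ for some $C_{k}\in\mathcal{K}$.

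The remaining step, which I expect to be the real obstacle, is to show that each $C_{k}$ equals the stated constant $\prod_{j=1}^{N}(b_{j}/a_{k})_{\infty}/(b_{j})_{\infty}\cdot\prod_{j\neq k}(a_{j})_{\infty}/(a_{j}/a_{k})_{\infty}$. One first argues that $C_{k}$ is genuinely constant rather than merely $T$-invariant: on the torus $\mathbb{C}^{\ast}/q^{\mathbb{Z}}$ the only candidate singularities of $C_{k}$ arise from the zeros of the theta quotients and from the poles of the two ${}_{N+1}\varphi_{N}$ factors, all of which are confined to the single point $t\in q^{\mathbb{Z}}$, and a local analysis there shows they cancel, so $C_{k}$ is holomorphic on the compact torus and hence constant by Liouville. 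To evaluate the constant I would induct on $N$: the base case $N=1$ is the classical Thomae--Watson connection formula for ${}_{2}\varphi_{1}$, and the inductive step adjoins one further pair of parameters through the Jackson-integral representation \eqref{JACKSON} (equivalently, through a contiguity relation), tracking how the products of infinite $q$-shifted factorials transform. Alternatively the constant can be read off directly by matching the residues of both sides of \eqref{wat1} along $t\in q^{\mathbb{Z}}$, where the theta factor isolates the $k$-th term. Verifying that both sides solve the same equation is routine; it is this determination of the connection constants that carries the analytic weight.
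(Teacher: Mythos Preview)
Your approach is a legitimate one, but it is genuinely different from the paper's. The paper does not argue via the $q$-difference equation at all; it simply records that \eqref{wat1} follows from Cauchy's residue theorem applied to the Mellin--Barnes type integral
\[
\int_{C}\frac{(b_{1}x,\dots,b_{N}x,\,qx/t,\,t/x)_{\infty}}{(a_{1}x,\dots,a_{N+1}x,\,1/x)_{\infty}}\,\frac{{\rm d}x}{x},
\]
with $C$ a deformation of the unit circle separating the poles of $1/(a_{1}x,\dots,a_{N+1}x)_{\infty}$ from those of $1/(1/x)_{\infty}$, and refers to \cite[Section~4.10]{GR} for details. Summing residues at $x\in q^{\mathbb{Z}_{\geq 0}}$ yields the left-hand side of \eqref{wat1}; summing residues at $x=a_{k}^{-1}q^{-n}$ yields the $k$-th summand on the right, with the infinite-product prefactor emerging directly from the residue. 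No separate identification of connection constants is required.

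Your route---show both sides solve the same order-$(N{+}1)$ equation, exhibit $\{v_k\}$ as a fundamental system at $t=\infty$ via distinct exponents, then pin down the pseudo-constants $C_k$---would also work, but the last step, which you rightly flag as the crux, is not yet closed. The Liouville argument on $\mathbb{C}^{\ast}/q^{\mathbb{Z}}$ presupposes that the left-hand side, defined a priori only for $|t|<1$, extends meromorphically to $\mathbb{C}^{\ast}$ with poles confined to $t\in q^{\mathbb{Z}}$; likewise your residue-matching alternative needs the analytically continued left-hand side to actually have poles there. That meromorphic continuation with controlled singularities is precisely what the Mellin--Barnes integral above furnishes, so at this point your argument tacitly borrows the paper's method. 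Your induction on $N$ would avoid this circularity, but as written it is only a plan: you would need an explicit Euler-type identity expressing ${}_{N+1}\varphi_{N}$ as a Jackson integral of ${}_{N}\varphi_{N-1}$ and to propagate all prefactors through it. Either completion is feasible; the residue computation is simply more economical because the constants fall out of the calculation rather than having to be solved for afterwards.
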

This formula can be derived by applying the Cauchy's residue theorem to the following integral:
\begin{gather*}
	\int_{C}\frac{(b_{1}x,\dots,b_{N}x,qx/t,t/x)_{\infty}}{(a_{1}x,\dots,a_{N+1}x,1/x)_{\infty}}\frac{{\rm d}x}{x}.
\end{gather*}
Here the contour $C$ is a deformation of the positively oriented unit circle so that the poles of $1/(a_{1}x,\dots,a_{N+1}x)_{\infty}$ lie outside $C$, and the poles of $1/(1/x)_{\infty}$ and 0 lie inside $C$.
For more details, see \cite[Section~4.10]{GR}.

First, we consider the matrix which connects $\bm{u}^{L,\mathrm{id}}$ with $\bm{u}^{L+1,\mathrm{id}}$.
If $l\neq L+1$, then we have
\begin{gather*}
	u_{k,l}^{L,\mathrm{id}}=u_{k,l}^{L+1,\mathrm{id}},
\end{gather*}
easily.
Hence, we should calculate the connection formula of $u_{0}^{L,\mathrm{id}}$, $u_{k,L+1}^{L,\mathrm{id}}$.
By rewriting the definition (\ref{sum1}), we have
\begin{gather}
	\nonumber\mathcal{F}_{N,M}^{L}\left(
	\begin{matrix}
		\{a_{j}\},\{b_{i}\}\\
		\{c_{j}\}	
	\end{matrix};\{t_{i}\}\right)\\
	\nonumber=\sum_{m_{1},\dots,m_{M}\geq0}\prod_{j=1}^{N}\frac{(a_{j}/b_{L+1}\cdots b_{M})_{m(L)}}{(c_{j}/b_{L+1}\cdots b_{M})_{m(L)}}\prod_{i=1}^{M}\frac{(b_{i})_{m_{i}}}{(q)_{m_{i}}}\prod_{i=1}^{L}t_{i}^{m_{i}}\prod_{i=L+1}^{M}\left(\frac{q}{b_{i}t_{i}}\right)^{m_{i}}\\
	\nonumber=\sum_{m_{1},\dots,m_{L},m_{L+2},\dots,m_{M}\geq0}\Biggl\{\prod_{j=1}^{N}\frac{(a_{j}/b_{L+1}\cdots b_{M})_{m(L)'}}{(c_{j}/b_{L+1}\cdots b_{M})_{m(L)'}}\prod_{\substack{1\leq i\leq M\\i\neq L+1}}\frac{(b_{i})_{m_{i}}}{(q)_{m_{i}}}\prod_{i=1}^{L}t_{i}^{m_{i}}\prod_{i=L+2}^{M}\left(\frac{q}{b_{i}t_{i}}\right)^{m_{i}}\\
	\qquad\times{}_{N+1}\varphi_{N}\left(
	\begin{matrix}
		\{qb_{L+1}\cdots b_{M}/c_{j}q^{m(L)'}\}_{1\leq j\leq N},\,b_{L+1}\\
		\{qb_{L+1}\cdots b_{M}/a_{j}q^{m(L)'}\}_{1\leq j\leq N}	
	\end{matrix};\prod_{j=1}^{N}\frac{c_{j}}{a_{j}}\cdot\frac{q}{b_{L+1}t_{L+1}}\right)\Biggr\}.\label{cal_F_NML}
\end{gather}
Here and in the following, we use the notation
\begin{gather*}
	m(l)'=\sum_{i=1}^{l}m_{i}-\sum_{i=l+2}^{M}m_{i},
\end{gather*}
for $m=(m_{1},\dots,m_{M}) $ and $0\leq l\leq M $.
By applying the formula (\ref{wat1}) to ${}_{N+1}\varphi_{N}$, we obtain
\begin{gather*}
	\nonumber\mathcal{F}_{N,M}^{L}\left(
	\begin{matrix}
		\{a_{j}\}_{1\leq j\leq N},\{b_{i}\}_{1\leq i\leq M}\\
		\{c_{j}\}_{1\leq j\leq N}	
	\end{matrix};\{t_{i}\}_{1\leq i\leq M}\right)\\
	\nonumber=\prod_{j=1}^{N}\frac{(qb_{L+2}\cdots b_{M}/a_{j},qb_{L+1}\cdots b_{M}/c_{j})_{\infty}}{(qb_{L+1}\cdots b_{M}/a_{j},qb_{L+2}\cdots b_{M}/c_{j})_{\infty}}\cdot\frac{\theta(t_{L+1}a_{1}\cdots a_{N}/c_{1}\cdots c_{N})}{\theta(t_{L+1}b_{L+1}a_{1}\cdots a_{N}/c_{1}\cdots c_{N})}\\
	\nonumber\qquad{} \times\mathcal{F}_{N,M}^{L+1}\left(
	\begin{matrix}
		\{a_{j}\}_{1\leq j\leq N},\{b_{i}\}_{1\leq i\leq M}\\
		\{c_{j}\}_{1\leq j\leq N}	
	\end{matrix};\{t_{i}\}_{1\leq i\leq M}\right)\\
	\nonumber\qquad {} +\sum_{d=1}^{N}\Biggl\{\prod_{j=1}^{N}\frac{(c_{d}/a_{j})_{\infty}}{(qb_{L+1}\cdots b_{M}/a_{j})_{\infty}}\prod_{\substack{1\leq j\leq N\\j\neq d}}\frac{(qb_{L+1}\cdots b_{M}/c_{j})_{\infty}}{(c_{d}/c_{j})_{\infty}}\cdot\frac{(b_{L+1})_{\infty}}{(c_{d}/qb_{L+2}\cdots b_{M})_{\infty}}\\
	\qquad{} \times\frac{\theta(t_{L+1}a_{1}\cdots a_{N}c_{d}/qb_{L+2}\cdots b_{M}c_{1}\cdots c_{N})}{\theta(t_{L+1}b_{L+1}a_{1}\cdots a_{N}/c_{1}\cdots c_{N})}\\
\qquad{}\times \mathcal{F}_{N,M}^{L+1;d,L+1}\left(
	\begin{matrix}
		\{a_{j}\}_{1\leq j\leq N},\{b_{i}\}_{1\leq i\leq M}\\
		\{c_{j}\}_{1\leq j\leq N}	
	\end{matrix};\{t_{i}\}_{1\leq i\leq M}\right)\Biggr\}.
\end{gather*}
By similar calculation of (\ref{cal_F_NML}), we have
\begin{gather*}
	\nonumber\mathcal{F}_{N,M}^{L;k,L+1}\left(
	\begin{matrix}
		\{a_{j}\}_{1\leq j\leq N},\{b_{i}\}_{1\leq i\leq M}\\
		\{c_{j}\}_{1\leq j\leq N}	
	\end{matrix};\{t_{i}\}_{1\leq i\leq M}\right)\\
	=\prod_{\substack{1\leq j\leq N\\j\neq k}}\frac{(qb_{L+2}\cdots b_{M}/a_{j})_{\infty}}{(qa_{k}/a_{j})_{\infty}}\cdot\frac{(q/b_{L+1})_{\infty}}{(qa_{k}/b_{L+1}\cdots b_{M})_{\infty}}\prod_{j=1}^{N}\frac{(qa_{k}/c_{j})_{\infty}}{(qb_{L+2}\cdots b_{M}/c_{j})_{\infty}}\\
\nonumber {}\times\frac{\theta(t_{L+1}b_{L+1}\cdots b_{M}a_{1}\cdots a_{N}/a_{k}c_{1}\cdots c_{N})}{\theta(t_{L+1}b_{L+1}a_{1}\cdots a_{N}/c_{1}\cdots c_{N})}\mathcal{F}_{N,M}^{L+1}\left(
	\begin{matrix}
		\{a_{j}\}_{1\leq j\leq N},\{b_{i}\}_{1\leq i\leq M}\\
		\{c_{j}\}_{1\leq j\leq N}	
	\end{matrix};\{t_{i}\}_{1\leq i\leq M}\right)\\
	\nonumber {} +\sum_{d=1}^{N}\Biggl\{\prod_{\substack{1\leq j\leq N\\j\neq k}}\frac{(c_{d}/a_{j})_{\infty}}{(qa_{k}/a_{j})_{\infty}}\cdot\frac{(c_{d}/b_{L+1}\cdots b_{M})_{\infty}}{(qa_{k}/b_{L+1}\cdots b_{M})_{\infty}}\prod_{\substack{1\leq j\leq N\\j\neq d}}\frac{(qa_{k}/c_{j})_{\infty}}{(c_{d}/c_{j})_{\infty}}\cdot\frac{(a_{k}/b_{L+2}\cdots b_{M})_{\infty}}{(c_{d}/qb_{L+2}\cdots b_{M})_{\infty}}\\
{}\times\frac{\theta(t_{L+1}b_{L+1}a_{1}\cdots a_{N}c_{d}/qc_{1}\cdots c_{N}a_{k})}{\theta(t_{L+1}b_{L+1}a_{1}\cdots a_{N}/c_{1}\cdots c_{N})}\mathcal{G}_{N,M}^{L+1;d,L+1}\left(
	\begin{matrix}
		\{a_{j}\}_{1\leq j\leq N},\{b_{i}\}_{1\leq i\leq M}\\
		\{c_{j}\}_{1\leq j\leq N}	
	\end{matrix};\{t_{i}\}_{1\leq i\leq M}\right)\Biggr\}.
\end{gather*}
Therefore, we set
\begin{gather*}
	A^{L,\mathrm{id}}=A^{L,\mathrm{id}}\left(
	\begin{matrix}
		\{a_{j}\}_{1\leq j\leq N},\{b_{i}\}_{1\leq i\leq M}\\
		\{c_{j}\}_{1\leq j\leq N}
	\end{matrix};t_{L+1}\right)=
	\begin{pmatrix}
		A_{0,0} & A_{0,1} & \cdots & A_{0,N}\\
		A_{1,0} & A_{1,1} & \cdots & A_{1,N}\\
		\vdots & \vdots & & \vdots\\
		A_{N,0} & A_{N,1} & \cdots & A_{N,N}\\
	\end{pmatrix},\\
	A_{0,0}=\prod_{j=1}^{N}\frac{(qb_{L+2}\cdots b_{M}/a_{j},qb_{L+1}\cdots b_{M}/c_{j})_{\infty}}{(qb_{L+1}\cdots b_{M}/a_{j},qb_{L+2}\cdots b_{M}/c_{j})_{\infty}}\cdot\frac{\theta(t_{L+1}a_{1}\cdots a_{N}/c_{1}\cdots c_{N})}{\theta(t_{L+1}b_{L+1}a_{1}\cdots a_{N}/c_{1}\cdots c_{N})}t_{L+1}^{-\beta_{L+1}},\\
	A_{0,d}=(A_{0,(1,d)},A_{0,(2,d)},\dots,A_{0,(M,d)})=(0,\dots,0,A_{0,(L+1,d)},0,\dots,0),\\
	\nonumber A_{0,(L+1,d)}=\prod_{j=1}^{N}\frac{(c_{d}/a_{j})_{\infty}}{(qb_{L+1}\cdots b_{M}/a_{j})_{\infty}}\prod_{\substack{1\leq j\leq N\\j\neq d}}\frac{(qb_{L+1}\cdots b_{M}/c_{j})_{\infty}}{(c_{d}/c_{j})_{\infty}}\cdot\frac{(b_{L+1})_{\infty}}{(c_{d}/qb_{L+2}\cdots b_{M})_{\infty}}\\
\hphantom{A_{0,(L+1,d)}=}{}
\times\frac{\theta(t_{L+1}a_{1}\cdots a_{N}c_{d}/qb_{L+2}\cdots b_{M}c_{1}\cdots c_{N})}{\theta(t_{L+1}b_{L+1}a_{1}\cdots a_{N}/c_{1}\cdots c_{N})}t_{L+1}^{-1-\sum_{i=L+1}^{M}\beta_{i}+\gamma_{d}},\\
	A_{k,0}={}^{\rm T}(A_{(1,k),0},A_{(2,k),0},\dots,A_{(N,k),0})={}^{\rm T}(0,\dots,0,A_{(L+1,k),0},0,\dots,0),\\
	\nonumber A_{(L+1,k),0}=\prod_{\substack{1\leq j\leq N\\j\neq k}}\frac{(qb_{L+2}\cdots b_{M}/a_{j})_{\infty}}{(qa_{k}/a_{j})_{\infty}}\cdot\frac{(q/b_{L+1})_{\infty}}{(qa_{k}/b_{L+1}\cdots b_{M})_{\infty}}\prod_{j=1}^{N}\frac{(qa_{k}/c_{j})_{\infty}}{(qb_{L+2}\cdots b_{M}/c_{j})_{\infty}}\\
\hphantom{A_{(L+1,k),0}=}{}
\times\frac{\theta(t_{L+1}b_{L+1}\cdots b_{M}a_{1}\cdots a_{N}/a_{k}c_{1}\cdots c_{N})}{\theta(t_{L+1}b_{L+1}a_{1}\cdots a_{N}/c_{1}\cdots c_{N})}t_{L+1}^{-\alpha_{k}+\sum_{i=L+2}^{M}\beta_{i}},\\
	A_{k,d}=
	\begin{pmatrix}
		I_{L} & O & O\\
		O & A_{(L+1,k),(L+1,d)} & O\\
		O & O & I_{M-L-1}
	\end{pmatrix},\\
	\nonumber A_{(L+1,k),(L+1,d)}=\prod_{\substack{1\leq j\leq N\\j\neq k}}\frac{(c_{d}/a_{j})_{\infty}}{(qa_{k}/a_{j})_{\infty}}\cdot\frac{(c_{d}/b_{L+1}\cdots b_{M})_{\infty}}{(qa_{k}/b_{L+1}\cdots b_{M})_{\infty}}\\
\hphantom{A_{(L+1,k),(L+1,d)}=}{}
\times \prod_{\substack{1\leq j\leq N\\j\neq d}}\frac{(qa_{k}/c_{j})_{\infty}}{(c_{d}/c_{j})_{\infty}}\cdot\frac{(a_{k}/b_{L+2}\cdots b_{M})_{\infty}}{(c_{d}/qb_{L+2}\cdots b_{M})_{\infty}}\\
\hphantom{A_{(L+1,k),(L+1,d)}=}{}
\times\frac{\theta(t_{L+1}b_{L+1}a_{1}\cdots a_{N}c_{d}/qc_{1}\cdots c_{N}a_{k})}{\theta(t_{L+1}b_{L+1}a_{1}\cdots a_{N}/c_{1}\cdots c_{N})}t_{L+1}^{-1-\alpha_{k}+\gamma_{d}},
\end{gather*}
where $1\leq k,d\leq N$, $I_{n}$ is the unit matrix of degree $n$ and $O$ is the null matrix.
Then we have
\begin{gather*}
	\bm{u}^{L,\mathrm{id}}=A^{L,\mathrm{id}}\bm{u}^{L+1,\mathrm{id}}.
\end{gather*}
Secondly, we consider the matrix which connects $\bm{u}^{L,\mathrm{id}}$ with $\bm{u}^{L-1,\mathrm{id}}$.
This can be calculated by a similar method for deriving the matrix $A^{L,\mathrm{id}}$.
We set
\begin{gather*}
	B^{L,\mathrm{id}}=B^{L,\mathrm{id}}\left(
	\begin{matrix}
		\{a_{j}\}_{1\leq j\leq N},\{b_{i}\}_{1\leq i\leq M}\\
		\{c_{j}\}_{1\leq j\leq N}
	\end{matrix};t_{L}\right)=
	\begin{pmatrix}
		B_{0,0} & B_{0,1} & \cdots & B_{0,N}\\
		B_{1,0} & B_{1,1} & \cdots & B_{1,N}\\
		\vdots & \vdots & & \vdots\\
		B_{N,0} & B_{N,1} & \cdots &B_{N,N}\\
	\end{pmatrix},\\
	B_{0,0}=\prod_{j=1}^{N}\frac{(a_{j}/b_{L+1}\cdots b_{M},c_{j}/b_{L}\cdots b_{M})_{\infty}}{(a_{j}/b_{L}\cdots b_{M},c_{j}/b_{L+1}\cdots b_{M})_{\infty}}\cdot\frac{\theta(t_{L}b_{L})}{\theta(t_{L})}t_{L}^{\beta_{L}},\\
	B_{0,d}=(B_{0,(1,d)},B_{0,(2,d)},\dots,B_{0,(M,d)})=(0,\dots,0,B_{0,(L,d)},0,\dots,0),\\
	B_{0,(L,d)}=\prod_{j=1}^{N}\frac{(c_{j}/a_{d})_{\infty}}{(c_{j}/b_{L+1}\cdots b_{M})_{\infty}}\prod_{\substack{1\leq j\leq N\\j\neq d}}\frac{(a_{j}/b_{L+1}\cdots b_{M})_{\infty}}{(a_{j}/a_{d})_{\infty}}\cdot\frac{(b_{L})_{\infty}}{(b_{L}\cdots b_{M}/a_{d})_{\infty}}\\
\hphantom{B_{0,(L,d)}=}{}
\times \frac{\theta(t_{L}a_{d}/b_{L+1}\cdots b_{M})}{\theta(t_{L})}t_{L}^{\alpha_{d}-\sum_{i=L+1}^{M}\beta_{i}},\\
	B_{k,0}={}^{\rm T}(B_{(1,k),0},B_{(2,k),0},\dots,B_{(M,k),0})={}^{\rm T}(0,\dots,0,B_{(L,k),0},0,\dots,0),\\
	\nonumber B_{(L,k),0}=\prod_{\substack{1\leq j\leq N\\j\neq k}}\frac{(c_{j}/b_{L}\cdots b_{M})_{\infty}}{(qc_{j}/c_{k})_{\infty}}\cdot\frac{(q/b_{L})_{\infty}}{(q^{2}b_{L+1}\cdots b_{M}/c_{k})_{\infty}}\prod_{j=1}^{N}\frac{(qa_{j}/c_{k})_{\infty}}{(a_{j}/b_{L}\cdots b_{M})_{\infty}}\\
\hphantom{B_{(L,k),0}=}{}
\times\frac{\theta(t_{L}qb_{L}\cdots b_{M}/c_{k})}{\theta(t_{L})}t_{L}^{1+\sum_{i=L}^{M}\beta_{i}-\gamma_{k}},\\
	B_{k,d}=
	\begin{pmatrix}
		I_{L-1} & O & O\\
		O &B_{(L,k),(L,d)} & O\\
		O & O & I_{M-L}
	\end{pmatrix},\\
	\nonumber B_{(L,k),(L,d)}=\prod_{\substack{1\leq j\leq N\\j\neq k}}\frac{(c_{j}/a_{d})_{\infty}}{(qc_{j}/c_{k})_{\infty}}\cdot\frac{(qb_{L+1}\cdots b_{M}/a_{d})_{\infty}}{(q^{2}b_{L+1}\cdots b_{M}/c_{k})_{\infty}}\prod_{\substack{1\leq j\leq N\\j\neq d}}\frac{(qa_{j}/c_{k})_{\infty}}{(a_{j}/a_{d})_{\infty}}\cdot\frac{(qb_{L}\cdots b_{M}/c_{k})_{\infty}}{(b_{L}\cdots b_{M}/a_{d})_{\infty}}\\
\hphantom{B_{(L,k),(L,d)}=}{}
\times\frac{\theta(t_{L}qa_{d}/c_{k})}{\theta(t_{L})}t_{L}^{1+\alpha_{d}-\gamma_{k}},
\end{gather*}
where $1\leq k,d\leq N$.
Then we have
\begin{gather*}
	\bm{u}^{L,\mathrm{id}}=B^{L,\mathrm{id}}\bm{u}^{L-1,\mathrm{id}}.
\end{gather*}
Finally, we consider the matrix which connects $\bm{u}^{M,\mathrm{id}}$ with $\bm{u}^{M,s_{r}}$.
We have
\begin{gather*}
	u_{0}^{M,s_{r}}=u_{0}^{M,\mathrm{id}},\qquad
	u_{k,l}^{M,s_{r}}=u_{k,l}^{M,\mathrm{id}},
\end{gather*}
easily if $l\neq r,r+1$.
Thus we should calculate the connection formula of $u_{k,r}^{M,s_{r}}$, $u_{k,r+1}^{M,s_{r}}$.
We have
\begin{gather*}
	\nonumber\mathcal{G}_{N,M}^{M;k,r}\left(
	\begin{matrix}
		\{a_{j}\}_{1\leq j\leq N},\{b_{s_{r}(i)}\}_{1\leq i\leq M}\\
		\{c_{j}\}_{1\leq j\leq N}	
	\end{matrix};\{t_{s_{r}(i)}\}_{1\leq i\leq M}\right)\\
\nonumber=\sum_{m_{1},\dots,m_{M}\geq0}\Biggl\{\prod_{k=1}^{N}\frac{(qa_{k}/c_{j})_{m_{M}}}{(qc_{k}/c_{j})_{m_{M}}}\prod_{i=1}^{r-1}\frac{(b_{i})_{m_{i}}}{(q)_{m_{i}}}\prod_{i=r+2}^{M}\frac{(b_{i})_{m_{i-1}}}{(q)_{m_{i-1}}}\cdot\frac{(b_{r})_{m_{r}}}{(q)_{m_{r}}}\frac{(c_{k}/qb_{r}b_{r+2}\cdots b_{M})_{m(r-1)}}{(c_{k}/b_{r}\cdots b_{M})_{m(r-1)}}\\
\qquad\nonumber{}\times\prod_{i=1}^{r-1}\left(\frac{qt_{i}}{b_{r+1}t_{r+1}}\right)^{m_{i}}\prod_{i=r+2}^{M}\left(\frac{b_{r+1}t_{r+1}}{b_{i}t_{i}}\right)^{m_{i-1}}\cdot\left(\frac{b_{r+1}t_{r+1}}{b_{r}t_{r}}\right)^{m_{r}}\left(\frac{b_{r+1}t_{r+1}}{q}\right)^{m_{M}}\Biggr\}\\
\nonumber=\sum_{m_{1},\dots,m_{r-1},m_{r+1},\dots,m_{M}\geq0}
\Biggl\{\prod_{k=1}^{N}\frac{(qa_{k}/c_{j})_{m_{M}}}{(qc_{k}/c_{j})_{m_{M}}}\prod_{i=1}^{r-1}\frac{(b_{i})_{m_{i}}}{(q)_{m_{i}}}\prod_{i=r+2}^{M}\frac{(b_{i})_{m_{i-1}}}{(q)_{m_{i-1}}}\\
	\qquad{}\times
	\frac{(c_{k}/qb_{r}b_{r+2}\cdots b_{M})_{m(r-1)'}}{(c_{k}/b_{r}\cdots b_{M})_{m(r-1)'}}\!\prod_{i=1}^{r-1}\! \left(\frac{qt_{i}}{b_{r+1}t_{r+1}}\right)^{m_{i}}\!
\prod_{i=r+2}^{M}\! \left(\frac{b_{r+1}t_{r+1}}{b_{i}t_{i}}\right)^{m_{i-1}}\! \cdot\left(\frac{b_{r+1}t_{r+1}}{q}\right)^{m_{M}}\\
	\qquad{}\times{}_{2}\varphi_{1}\left(
	\begin{matrix}
		b_{r},qb_{r}\cdots b_{M}/c_{k}q^{m(r-1)'}\\
		q^{2}b_{r}b_{r+2}\cdots b_{M}/c_{k}q^{m(r-1)'}
	\end{matrix};\frac{qt_{r+1}}{b_{r}t_{r}}\right)\Biggr\},
\end{gather*}
and by applying the formula (\ref{wat1}) to ${}_{2}\varphi_{1}$, we obtain
\begin{gather*}
	\nonumber\mathcal{G}_{N,M}^{M;k,r}\left(
	\begin{matrix}
		\{a_{j}\}_{1\leq j\leq N},\{b_{s_{r}(i)}\}_{1\leq i\leq M}\\
		\{c_{j}\}_{1\leq j\leq N}	
	\end{matrix};\{t_{s_{r}(i)}\}_{1\leq i\leq M}\right)\\
	\nonumber=\frac{(q/b_{r+1},b_{r})_{\infty}}{(q^{2}b_{r}b_{r+2}\cdots b_{M}/c_{k},c_{k}/qb_{r+1}\cdots b_{M})_{\infty}}\frac{\theta(t_{r}c_{k}/t_{r+1}qb_{r+1}\cdots b_{M})}{\theta(t_{r}b_{r}/t_{r+1})}\\
	\qquad{} \times\mathcal{G}_{N,M}^{M;k,r}\left(
	\begin{matrix}
		\{a_{j}\}_{1\leq j\leq N},\{b_{i}\}_{1\leq i\leq M}\\
		\{c_{j}\}_{1\leq j\leq N}	
	\end{matrix};\{t_{i}\}_{1\leq i\leq M}\right)\\
	\qquad{} +\frac{(q^{2}b_{r+2}\cdots b_{M}/c_{k},qb_{r}\cdots b_{M}/c_{k})_{\infty}}{(q^{2}b_{r}b_{r+2}\cdots b_{M}/c_{k},q_{r+1}\cdots b_{M}/c_{k})_{\infty}}\frac{\theta(t_{r}/t_{r+1})}{\theta(t_{r}b_{r}/t_{r+1})}\\
	\qquad{} \times\mathcal{G}_{N,M}^{M;j,r+1}\left(
	\begin{matrix}
		\{a_{j}\}_{1\leq j\leq N},\{b_{i}\}_{1\leq i\leq M}\\
		\{c_{j}\}_{1\leq j\leq N}	
	\end{matrix};\{t_{i}\}_{1\leq i\leq M}\right).
\end{gather*}
Similarly, we have
\begin{gather*}
	\nonumber\mathcal{G}_{N,M}^{M;k,r+1}\left(
	\begin{matrix}
		\{a_{j}\}_{1\leq j\leq N},\{b_{s_{r}(i)}\}_{1\leq i\leq M}\\
		\{c_{j}\}_{1\leq j\leq N}	
	\end{matrix};\{t_{s_{r}(i)}\}_{1\leq i\leq M}\right)\\
	\nonumber=\frac{(c_{k}/b_{r}\cdots b_{M},c_{k}/qb_{r+2}\cdots b_{M})_{\infty}}{(c_{k}/b_{r}b_{r+2}\cdots b_{M},c_{k}/qb_{r+1}\cdots b_{M})_{\infty}}\frac{\theta(t_{r}b_{r}/t_{r+1}b_{r+1})}{\theta(t_{r}b_{r}/t_{r+1})}\\
	\qquad{}\times\mathcal{G}_{N,M}^{M;k,r}\left(
	\begin{matrix}
		\{a_{j}\}_{1\leq j\leq N},\{b_{i}\}_{1\leq i\leq M}\\
		\{c_{j}\}_{1\leq j\leq N}	
	\end{matrix};\{t_{i}\}_{1\leq i\leq M}\right)\\
	\nonumber\qquad{} +\frac{(q/b_{r},b_{r+1})_{\infty}}{(c_{k}/b_{r}b_{r+2}\cdots b_{M},qb_{r+1}\cdots b_{M}/c_{k})_{\infty}}\frac{\theta(t_{r}qb_{r}b_{r+2}\cdots b_{M}/t_{r+1}c_{k})}{\theta(t_{r}b_{r}/t_{r+1})}\\
	\qquad{}\times\mathcal{G}_{N,M}^{M;k,r+1}\left(
	\begin{matrix}
		\{a_{j}\}_{1\leq j\leq N},\{b_{i}\}_{1\leq i\leq M}\\
		\{c_{j}\}_{1\leq j\leq N}	
	\end{matrix};\{t_{i}\}_{1\leq i\leq M}\right).
\end{gather*}
Therefore, we set
\begin{gather}
	\label{matS}S_{s_{r}}^{M,\mathrm{id}}=S_{s_{r}}^{M,\mathrm{id}}\left(
	\begin{matrix}
		\{b_{i}\}_{1\leq i\leq M}\\
		\{c_{j}\}_{1\leq j\leq N}
	\end{matrix};\frac{t_{r}}{t_{r+1}}\right)=
	\begin{pmatrix}
		1 & O & \cdots & \cdots & O\\
		O &S_{r}^{1} & O & \cdots & O\\
		\vdots & O & S_{r}^{2} & &\vdots \\
		\vdots & \vdots & &\ddots &O\\
		O & O&\cdots &O &S_{r}^{N}
	\end{pmatrix},\\
	\nonumber S_{r}^{k}=
	\begin{pmatrix}
		I_{r-1} & O & O & O\\
		O & S_{r,r}^{k} & S_{r,r+1}^{k} & O\\
		O & S_{r+1,r}^{k} & S_{r+1,r+1}^{k} & O\\
		O & O & O& I_{M-r-1}
	\end{pmatrix},\\
	\nonumber S_{r,r}^{k}=\frac{(q/b_{r+1},b_{r})_{\infty}}{(q^{2}b_{r}b_{r+2}\cdots b_{M}/c_{k},c_{k}/qb_{r+1}\cdots b_{M})_{\infty}}\\
\nonumber\hphantom{S_{r,r}^{k}=}{}
\times
\frac{\theta(t_{r}c_{k}/t_{r+1}qb_{r+1}\cdots b_{M})}{\theta(t_{r}b_{r}/t_{r+1})}\left(\frac{t_{r}}{t_{r+1}}\right)^{-1-\sum_{i=r}^{M}\beta_{i}+\gamma_{k}},\\
	\nonumber S_{r,r+1}^{k}=\frac{(q^{2}b_{r+2}\cdots b_{M}/c_{k},qb_{r}\cdots b_{M}/c_{k})_{\infty}}{(q^{2}b_{r}b_{r+2}\cdots b_{M}/c_{k},qb_{r+1}\cdots b_{M}/c_{k})_{\infty}}\frac{\theta(t_{r}/t_{r+1})}{\theta(t_{r}b_{r}/t_{r+1})}\left(\frac{t_{r}}{t_{r+1}}\right)^{-\beta_{r}},\\
	\nonumber S_{r+1,r}^{k}=\frac{(c_{k}/b_{r}\cdots b_{M},c_{k}/qb_{r+2}\cdots b_{M})_{\infty}}{(c_{k}/b_{r}b_{r+2}\cdots b_{M},c_{k}/qb_{r+1}\cdots b_{M})_{\infty}}\frac{\theta(t_{r}b_{r}/t_{r+1}b_{r+1})}{\theta(t_{r}b_{r}/t_{r+1})}\left(\frac{t_{r}}{t_{r+1}}\right)^{-\beta_{r+1}},\\
	\nonumber S_{r+1,r+1}^{k}=\frac{(q/b_{r},b_{r+1})_{\infty}}{(c_{k}/b_{r}b_{r+2}\cdots b_{M},qb_{r+1}\cdots b_{M}/c_{k})_{\infty}}\\
\nonumber\hphantom{S_{r+1,r+1}^{k}=}{}
\times
\frac{\theta(t_{r}qb_{r}b_{r+2}\cdots b_{M}/t_{r+1}c_{k})}{\theta(t_{r}b_{r}/t_{r+1})}\left(\frac{t_{r}}{t_{r+1}}\right)^{1+\sum_{i=r+2}^{M}\beta_{i}-\gamma_{k}},
\end{gather}
where $1\leq k\leq N$.
Then we have
\begin{gather*}
	\bm{u}^{M,s_{r}}=S_{s_{r}}^{M,\mathrm{id}}\bm{u}^{M,\mathrm{id}}.
\end{gather*}
Moreover, for $\sigma\in\mathfrak{S}_{M}$, we set
\begin{gather*}
	A^{L,\sigma}=A^{L,\mathrm{id}}\left(
	\begin{matrix}
		\{a_{j}\}_{1\leq j\leq N},\{b_{\sigma(i)}\}_{1\leq i\leq M}\\
		\{c_{j}\}_{1\leq j\leq N}
	\end{matrix};t_{\sigma(L+1)}\right),\\
	B^{L,\sigma}=B^{L,\mathrm{id}}\left(
	\begin{matrix}
		\{a_{j}\}_{1\leq j\leq N},\{b_{\sigma(i)}\}_{1\leq i\leq M}\\
		\{c_{j}\}_{1\leq j\leq N}
	\end{matrix};t_{\sigma(L)}\right),\\
	S_{s_{r}}^{M,\sigma}=S_{s_{r}}^{M,\mathrm{id}}\left(
	\begin{matrix}
		\{b_{\sigma(i)}\}_{1\leq i\leq M}\\
		\{c_{j}\}_{1\leq j\leq N}
	\end{matrix};\frac{t_{\sigma(r)}}{t_{\sigma(r+1)}}\right).
\end{gather*}
Then we have
\begin{gather*}
	\bm{u}^{L,\sigma}=A^{L,\sigma}\bm{u}^{L+1,\sigma},\qquad
	\bm{u}^{L,\sigma}=B^{L,\sigma}\bm{u}^{L-1,\sigma},\qquad
	\bm{u}^{M,s_{r}\sigma}=S_{s_{r}}^{M,\sigma}\bm{u}^{M,\sigma}.
\end{gather*}
Therefore, we obtain the following theorem:
\begin{Theorem}	\label{thm1}
	We assume the condition \eqref{CONDALL} for $1\leq i\leq M+1$, $1\leq j\neq k\leq N$ and for any $\sigma\in\mathfrak{S}_{M}$.
	For $0\leq L_{1},L_{2}\leq M$ and $\sigma_{1},\sigma_{2}\in\mathfrak{S}_{M}$, we have
	\begin{gather*}
		\bm{u}^{L_{2},\sigma_{2}}=A^{L_{2},\sigma_{2}}A^{L_{2}+1,\sigma_{2}}\cdots A^{M-1,\sigma_{2}}S_{s_{r_{1}}}^{M,s_{r_{2}}\cdots s_{r_{I}}\sigma_{1}}S_{s_{r_{2}}}^{M,s_{r_{3}}\cdots s_{r_{I}}\sigma_{1}}\cdots S_{s_{r_{I}}}^{M,\sigma_{1}}\\
\hphantom{\bm{u}^{L_{2},\sigma_{2}}=}{}
\times B^{M,\sigma_{1}}B^{M-1,\sigma_{1}}\cdots B^{L_{1}+1,\sigma_{1}}\bm{u}^{L_{1},\sigma_{1}},
	\end{gather*}
	if $\sigma_{2}=s_{r_{1}}\cdots s_{r_{I}}\sigma_{1}$, where $s_{r}=(r,r+1)\in\mathfrak{S}_{M}$.
\end{Theorem}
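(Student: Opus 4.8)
The idea is to reduce the general connection formula to repeated use of the three elementary relations established just above, namely
\begin{gather*}
\bm{u}^{L,\sigma}=A^{L,\sigma}\bm{u}^{L+1,\sigma},\qquad
\bm{u}^{L,\sigma}=B^{L,\sigma}\bm{u}^{L-1,\sigma},\qquad
\bm{u}^{M,s_{r}\sigma}=S_{s_{r}}^{M,\sigma}\bm{u}^{M,\sigma}.
\end{gather*}
Our fundamental solutions are indexed by pairs $(L,\sigma)$ with $0\le L\le M$ and $\sigma\in\mathfrak{S}_{M}$; the first two relations move between adjacent values of $L$ with $\sigma$ fixed, while the third moves between permutations differing by a left multiplication by an adjacent transposition, at the level $L=M$. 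The plan is therefore to join $(L_{1},\sigma_{1})$ to $(L_{2},\sigma_{2})$ along a path in this index set that first raises the first index from $L_{1}$ to $M$, then changes the permutation from $\sigma_{1}$ to $\sigma_{2}$ at $L=M$, and finally lowers the first index from $M$ back down to $L_{2}$.

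First I would iterate $\bm{u}^{L,\sigma_{1}}=B^{L,\sigma_{1}}\bm{u}^{L-1,\sigma_{1}}$ from $L=M$ down to $L=L_{1}+1$, and $\bm{u}^{L,\sigma_{2}}=A^{L,\sigma_{2}}\bm{u}^{L+1,\sigma_{2}}$ from $L=L_{2}$ up to $L=M-1$. These telescope to
\begin{gather*}
\bm{u}^{M,\sigma_{1}}=B^{M,\sigma_{1}}B^{M-1,\sigma_{1}}\cdots B^{L_{1}+1,\sigma_{1}}\bm{u}^{L_{1},\sigma_{1}},\qquad
\bm{u}^{L_{2},\sigma_{2}}=A^{L_{2},\sigma_{2}}A^{L_{2}+1,\sigma_{2}}\cdots A^{M-1,\sigma_{2}}\bm{u}^{M,\sigma_{2}},
\end{gather*}
reducing everything to connecting $\bm{u}^{M,\sigma_{1}}$ with $\bm{u}^{M,\sigma_{2}}$.

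For this middle step I would use that $\mathfrak{S}_{M}$ is generated by the adjacent transpositions $s_{r}=(r,r+1)$, which is precisely what the hypothesis $\sigma_{2}=s_{r_{1}}\cdots s_{r_{I}}\sigma_{1}$ supplies. Setting $\tau_{I+1}=\sigma_{1}$ and $\tau_{j}=s_{r_{j}}s_{r_{j+1}}\cdots s_{r_{I}}\sigma_{1}$ for $1\le j\le I$, so that $\tau_{1}=\sigma_{2}$, each transposition peeled off from the right gives $\bm{u}^{M,\tau_{j}}=S_{s_{r_{j}}}^{M,\tau_{j+1}}\bm{u}^{M,\tau_{j+1}}$ by the third relation, and composing these yields
\begin{gather*}
\bm{u}^{M,\sigma_{2}}=S_{s_{r_{1}}}^{M,s_{r_{2}}\cdots s_{r_{I}}\sigma_{1}}S_{s_{r_{2}}}^{M,s_{r_{3}}\cdots s_{r_{I}}\sigma_{1}}\cdots S_{s_{r_{I}}}^{M,\sigma_{1}}\bm{u}^{M,\sigma_{1}}.
\end{gather*}
Substituting the three telescoped expressions into one another then produces exactly the claimed formula.

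The individual steps are routine; the point that needs care is the bookkeeping of the intermediate permutations $\tau_{j}$, ensuring that each factor $S_{s_{r_{j}}}^{M,\tau_{j+1}}$ carries the correct superscript dictated by the generalized relation $\bm{u}^{M,s_{r}\sigma}=S_{s_{r}}^{M,\sigma}\bm{u}^{M,\sigma}$. I would also flag a conceptual subtlety: the reduced word $\sigma_{2}=s_{r_{1}}\cdots s_{r_{I}}\sigma_{1}$ is highly non-unique, so the displayed product of $S$-matrices is \emph{a priori} decomposition-dependent; the equality of the resulting formula across different reduced words is what forces the $S$-matrices to satisfy the braid relations, which is exactly the Yang--Baxter phenomenon taken up in the final section.
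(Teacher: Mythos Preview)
Your proposal is correct and follows exactly the paper's approach: the paper establishes the three elementary relations $\bm{u}^{L,\sigma}=A^{L,\sigma}\bm{u}^{L+1,\sigma}$, $\bm{u}^{L,\sigma}=B^{L,\sigma}\bm{u}^{L-1,\sigma}$, $\bm{u}^{M,s_{r}\sigma}=S_{s_{r}}^{M,\sigma}\bm{u}^{M,\sigma}$ and then simply states ``Therefore, we obtain the following theorem'', leaving the telescoping you spell out as implicit. Your remark on the non-uniqueness of the reduced word and its link to the Yang--Baxter equation is exactly the content of the paper's Section~\ref{secYB}.
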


\begin{Remark}Each element of the matrices $A^{L,\sigma}$, $B^{L,\sigma}$, $S_{s_{r}}^{M,\sigma}$ is a pseudo constant, i.e.,
	\begin{gather*}
T_{s}A^{L,\sigma}=A^{L,\sigma},\qquad T_{s}B^{L,\sigma}=B^{L,\sigma},\qquad T_{s}S_{s_{r}}^{M,\sigma}=S_{s_{r}}^{M,\sigma}.
	\end{gather*}
\end{Remark}

\section{A solution of the Yang--Baxter equation}\label{secYB}
In this section, we assume $b_{i}=q^{\beta_{i}}$, $1\leq i\leq M$, and $c_{j}=q^{\gamma_{j}}$, $1\leq j\leq N$.
We obtain an elliptic solution of the Yang--Baxter equation as an application of Theorem~\ref{thm1}.
The functions
\begin{gather*}
	v_{0}^{\sigma}=\mathcal{F}_{N,M}\left(
	\begin{matrix}
		\{a_{j}\}_{1\leq j\leq N},\{b_{i}\}_{1\leq i\leq M}\\
		\{c_{j}\}_{1\leq j\leq N}
	\end{matrix};\{t_{i}\}_{1\leq i\leq M}\right),\\
v_{k,l}^{\sigma}=t_{\sigma(l)}^{1+\sum_{i=l}^{M}\beta_{\sigma(i)}-\gamma_{k}}\prod_{m=l+1}^{M}t_{\sigma(m)}^{-\beta_{\sigma(m)}}\cdot\mathcal{F}_{N,M}^{M;k,l}\left(\begin{matrix}
		\{a_{j}\}_{1\leq j\leq N},\{b_{\sigma(i)}\}_{1\leq i\leq M}\\
		\{c_{j}\}_{1\leq j\leq N}	
	\end{matrix};\{t_{\sigma(i)}\}_{1\leq i\leq M}\right),
\end{gather*}
where $1\leq k\leq N$, $1\leq l\leq M$ and $\sigma\in\mathfrak{S}_{M}$, are solutions of the $q$-difference system $E_{N,M}$ in the region
\begin{gather*}
	D^{M,\sigma}=\left\{|t_{i}|<1,\, 1\leq i\leq M,\, \left|\frac{qt_{\sigma(i)}}{b_{\sigma(j)}t_{\sigma(j)}}\right|<1,\, 1\leq i<j\leq M\right\}.
\end{gather*}
Similar to the calculation of the matrix (\ref{matS}), we set
\begin{gather*}
	\bm{v}^{\sigma}={}^{\rm T}\big(v_{0}^{\sigma},v_{1,1}^{\sigma},\dots,v_{1,M}^{\sigma},v_{2,1}^{\sigma},\dots,v_{N,M}^{\sigma}\big),
\end{gather*}
and we set
\begin{gather}
	\nonumber\tilde{S}_{s_{r}}^{\mathrm{id}}=\tilde{S}_{s_{r}}^{\mathrm{id}}\left(
	\begin{matrix}
		\{b_{i}\}_{1\leq i\leq M}\\
		\{c_{j}\}_{1\leq j\leq N}
	\end{matrix};\frac{t_{r}}{t_{r+1}}\right)=
	\begin{pmatrix}
		1 & O & \cdots & \cdots & O\\
		O &\tilde{S}_{r}^{1} & O & \cdots & O\\
		\vdots & O & \tilde{S}_{r}^{2} & &\vdots \\
		\vdots & \vdots & &\ddots &O\\
		O & O&\cdots &O &\tilde{S}_{r}^{N}
	\end{pmatrix},\\
	\label{defS}\tilde{S}_{r}^{k}=\tilde{S}_{r}\left(
	\begin{matrix}
		\{b_{i}\}_{1\leq i\leq M}\\
		c_{k}
	\end{matrix};\frac{t_{r}}{t_{r+1}}\right)=
	\begin{pmatrix}
		I_{r-1} & O & O & O\\
		O & \tilde{S}_{r,r}^{k} & \tilde{S}_{r,r+1}^{k} & O\\
		O & \tilde{S}_{r+1,r}^{k} & \tilde{S}_{r+1,r+1}^{k} & O\\
		O & O & O& I_{M-r-1}
	\end{pmatrix},\\
	\nonumber\tilde{S}_{r,r}^{k}=\frac{(q/b_{r+1},b_{r})_{\infty}}{(q^{2}b_{r}b_{r+2}\cdots b_{M}/c_{k},c_{k}/qb_{r+1}\cdots b_{M})_{\infty}}\\
\nonumber\hphantom{\tilde{S}_{r,r}^{k}=}{}
\times\frac{\theta(t_{r}c_{k}/t_{r+1}qb_{r+1}\cdots b_{M})}{\theta(t_{r}b_{r}/t_{r+1})}\left(\frac{t_{r}}{t_{r+1}}\right)^{-1-\sum_{i=r}^{M}\beta_{i}+\gamma_{k}},\\
	\nonumber\tilde{S}_{r,r+1}^{k}=\frac{(q^{2}b_{r+2}\cdots b_{M}/c_{k},qb_{r}\cdots b_{M}/c_{k})_{\infty}}{(q^{2}b_{r}b_{r+2}\cdots b_{M}/c_{k},qb_{r+1}\cdots b_{M}/c_{k})_{\infty}}\frac{\theta(t_{r}/t_{r+1})}{\theta(t_{r}b_{r}/t_{r+1})}\left(\frac{t_{r}}{t_{r+1}}\right)^{-\beta_{r}},\\
	\nonumber\tilde{S}_{r+1,r}^{k}=\frac{(c_{k}/b_{r}\cdots b_{M},c_{k}/qb_{r+2}\cdots b_{M})_{\infty}}{(c_{k}/b_{r}b_{r+2}\cdots b_{M},c_{k}/qb_{r+1}\cdots b_{M})_{\infty}}\frac{\theta(t_{r}b_{r}/t_{r+1}b_{r+1})}{\theta(t_{r}b_{r}/t_{r+1})}\left(\frac{t_{r}}{t_{r+1}}\right)^{-\beta_{r+1}},\\
	\nonumber\tilde{S}_{r+1,r+1}^{k}=\frac{(q/b_{r},b_{r+1})_{\infty}}{(c_{k}/b_{r}b_{r+2}\cdots b_{M},qb_{r+1}\cdots b_{M}/c_{k})_{\infty}}\\
\nonumber\hphantom{\tilde{S}_{r+1,r+1}^{k}=}{}
\times \frac{\theta(t_{r}qb_{r}b_{r+2}\cdots b_{M}/t_{r+1}c_{k})}{\theta(t_{r}b_{r}/t_{r+1})}\left(\frac{t_{r}}{t_{r+1}}\right)^{1+\sum_{i=r+2}^{M}\beta_{i}-\gamma_{k}},
\end{gather}
where $1\leq k\leq N$ and $s_{r}=(r,r+1)\in\mathfrak{S}_{M}$.
Then we have
\begin{gather*}
	\bm{v}^{s_{r}}=\tilde{S}_{s_{r}}^{\mathrm{id}}\bm{v}^{\mathrm{id}}.
\end{gather*}
In addition, we set
\begin{gather*}
	\tilde{S}_{s_{r}}^{\sigma}=\tilde{S}_{s_{r}}^{\mathrm{id}}\left(
	\begin{matrix}
		\{b_{\sigma(i)}\}_{1\leq i\leq M}\\
		\{c_{j}\}_{1\leq j\leq N}
	\end{matrix};\frac{t_{\sigma(r)}}{t_{\sigma(r+1)}}\right),
\end{gather*}
for $\sigma\in\mathfrak{S}_{M}$, and then we have
\begin{gather*}
	\bm{v}^{s_{r}\sigma}=\tilde{S}_{s_{r}}^{\sigma}\bm{v}^{\sigma}.
\end{gather*}
\begin{Remark}
	The matrices $\tilde{S}_{s_{r}}^{\mathrm{id}}$, $1\leq r\leq M-1$, depend only on $t_{r}/t_{r+1}$ and the parameters $\{b_{i}\}$, $\{c_{j}\}$.
\end{Remark}
By the braid relation $(r,r+2)=s_{r}s_{r+1}s_{r}=s_{r+1}s_{r}s_{r+1}$, we have
\begin{gather*}
\bm{v}^{(r,r+2)}=\tilde{S}_{s_{r}}^{s_{r+1}s_{r}}\tilde{S}_{s_{r+1}}^{s_{r}} \tilde{S}_{s_{r}}^{\mathrm{id}}\bm{v}^{\mathrm{id}}=\tilde{S}_{s_{r+1}}^{s_{r}s_{r+1}}\tilde{S}_{s_{r}}^{s_{r+1}}\tilde{S}_{s_{r+1}}^{\mathrm{id}}\bm{v}^{\mathrm{id}}.
\end{gather*}
In particular, we find that the matrices $\tilde{S}_{r}$ satisfy the Yang--Baxter equation
\begin{gather*}
	\nonumber\tilde{S}_{r}\left(
	\begin{matrix}
		\{b_{s_{r+1}s_{r}(i)}\}_{1\leq i\leq M}\\
		c_{k}
	\end{matrix};u\right)
	\tilde{S}_{r+1}\left(
	\begin{matrix}
		\{b_{s_{r}(i)}\}_{1\leq i\leq M}\\
		c_{k}
	\end{matrix};uv\right)
	\tilde{S}_{r}\left(
	\begin{matrix}
		\{b_{i}\}_{1\leq i\leq M}\\
		c_{k}
	\end{matrix};v\right)\\	
	\qquad {}=
	\tilde{S}_{r+1}\left(
	\begin{matrix}
		\{b_{s_{r}s_{r+1}(i)}\}_{1\leq i\leq M}\\
		c_{k}
	\end{matrix};v\right)
	\tilde{S}_{r}\left(
	\begin{matrix}
		\{b_{s_{r+1}(i)}\}_{1\leq i\leq M}\\
		c_{k}
	\end{matrix};uv\right)
	\tilde{S}_{r+1}\left(
	\begin{matrix}
		\{b_{i}\}_{1\leq i\leq M}\\
		c_{k}
	\end{matrix};u\right),
\end{gather*}
where $u=t_{r+1}/t_{r+2}$, $v=t_{r}/t_{r+1}$.
\begin{Remark}
	For the details of the Yang--Baxter equation, see Jimbo's text \cite{J}.
\end{Remark}
\begin{Remark}
	Aomoto, Kato and Mimachi \cite{AKM} obtained an elliptic solution of the Yang--Baxter equation by considering the connection matrices of a holonomic $q$-difference system which was studied in \cite{M}.
	They obtained that the matrices
	\begin{gather*}
		P_{i}(u)=
		\begin{pmatrix}
			I_{i-1} & O & O\\
			O & W(\alpha'+(i-1)\beta',\beta';u) & O\\
			O & O & I_{n-i-1}
		\end{pmatrix},\\
		\nonumber W(\alpha,\beta;u)=
		\begin{pmatrix}
			\displaystyle u^{\alpha+3\beta+1}\frac{\theta\big(q^{-\beta}\big)\theta\big(uq^{\alpha+2\beta+1}\big)}{\theta\big(q^{-\alpha-2\beta}\big)\theta\big(uq^{-\beta}\big)}
			& \displaystyle q^{\beta+1}u^{\beta}\frac{\theta(u)\theta\big(q^{-\alpha-\beta+1}\big)\theta\big(q^{\alpha+3\beta+2}\big)}{\theta\big(q^{-\alpha-2\beta}\big)^{2}\theta\big(uq^{-\beta}\big)}\vspace{1mm}\\
			\displaystyle
			\displaystyle u^{\beta}\frac{\theta(u)}{\theta\big(uq^{-\beta}\big)}
			& \displaystyle u^{-\alpha-\beta}\frac{\theta\big(q^{-\beta}\big)\theta\big(uq^{-\alpha-2\beta}\big)}{\theta\big(q^{-\alpha-2\beta}\big)\theta\big(uq^{-\beta}\big)},
		\end{pmatrix},
	\end{gather*}
	where $1\leq i\leq n-1$, satisfy the Yang--Baxter equation
	\begin{gather*}
		P_{i}(u)P_{i+1}(uv)P_{i}(v)=P_{i+1}(v)P_{i}(uv)P_{i+1}(u).
	\end{gather*}
	They also found that the matrix $W$ is identified as the matrix $W'$ with entires of Boltzmann weight $\begin{matrix}
		\kappa\vspace{-1mm}\\
		\mu\, \Box\, \sigma \vspace{-2mm}\\
		\nu
	\end{matrix}$ of the $A_{1}^{(1)}$ face model discussed by Jimbo, Miwa and Okado \cite{JMO}.
	The matrix $W'$ is expressed as
	\begin{gather}
		\label{MATRIXJMO}W'=\begin{pmatrix}
			\displaystyle\frac{[a-u]}{[a]} &\displaystyle\frac{[u][a+1][a-1]}{[1][a]^{2}}\vspace{1mm}\\
			\displaystyle\frac{[u]}{[1]} & \displaystyle\frac{[a+u]}{[a]}
		\end{pmatrix},
	\end{gather}
	where $[u]=\theta_{1}(\pi u/L,q)$, $L\neq0$ is an arbitrary complex parameter and
	\begin{gather*}
		\theta_{1}(u,q)=2q^{1/8}\sin u\prod_{k=1}^{\infty}\big(1-2q^{k}\cos 2u+q^{2k}\big)\big(1-q^{k}\big)\\
\hphantom{\theta_{1}(u,q)}{}
=2q^{1/8}\sin u\big({\rm e}^{2\sqrt{-1}u}q,{\rm e}^{-2\sqrt{-1}u}q,q\big)_{\infty}.
	\end{gather*}
	The matrix $W$ is equivalent to $W'$ as follows:
	\begin{gather*}
		x^{2g_{c}}\begin{pmatrix}
			x^{-g_{a+1}} & 0\\
			0 & x^{-g_{a-1}}
		\end{pmatrix}W'\begin{pmatrix}
			x^{-g_{a+1}} & 0\\
			0 & x^{-g_{a-1}}
		\end{pmatrix}=\frac{\theta\big(xq^{-\beta}\big)}{\theta\big(q^{-\beta}\big)}W(\alpha,\beta;x),
	\end{gather*}
	with ${\rm e}^{2\pi\sqrt{-1}u/L}=x$, ${\rm e}^{2\pi\sqrt{-1}/L}=q^{\beta+1}$, ${\rm e}^{2\pi\sqrt{-1}a/L}=q^{-\alpha-2\beta}$, $g_{a-1}=\frac{\alpha+\beta}{2}$, $g_{a+1}=\frac{-\alpha-3\beta}{2}$, $2g_{c}=\frac{1}{2}$.
	On the other hand, by specializing parameters $b_{i}$ as $b_{1}=\cdots=b_{M}=q^{\beta}$, the matrices $\tilde{S}_{r}(u)=\tilde{S}_{r}\left(
	\begin{matrix}
		q^{\beta}\\
		c_{k}
	\end{matrix};u\right)$ satisfy the Yang--Baxter equation
	\begin{gather*}
		\tilde{S}_{r}(u)\tilde{S}_{r+1}(uv)\tilde{S}_{r}(v)=\tilde{S}_{r+1}(v)\tilde{S}_{r}(uv)\tilde{S}_{r+1}(u).
	\end{gather*}
	By definition (\ref{defS}), we have
	\begin{gather}
\tilde{S}_{r}(u)=
		\begin{pmatrix}
			I_{r-1} & O & O\\
			O & \tilde{W}(\gamma_{k}-2-(M-r-2)\beta,-\beta;u) & O\\
			O & O & I_{M-r-1}
		\end{pmatrix},\label{WMATRIX1}\\
\nonumber\tilde{W}(\alpha,\beta;u)=
		\begin{pmatrix}
			\displaystyle u^{\alpha+3\beta+1}\frac{\theta\big(q^{-\beta}\big)\theta\big(uq^{\alpha+2\beta+1}\big)}{\theta\big(q^{-\alpha-2\beta}\big)\theta\big(uq^{-\beta}\big)}
			&\displaystyle
u^{\beta}\frac{\theta(u)\big(q^{-\alpha-\beta},q^{-\alpha-3\beta-1}\big)_{\infty}}{\theta\big(uq^{-\beta}\big)\big(q^{-\alpha-2\beta},q^{-\alpha-2\beta-1}\big)_{\infty}}\vspace{1mm}\\
			\displaystyle
u^{\beta}\frac{\theta(u)\big(q^{\alpha+3\beta+2},q^{\alpha+\beta+1}\big)_{\infty}}{\theta\big(uq^{-\beta}\big)\big(q^{\alpha+2\beta+2},q^{\alpha+2\beta+1}\big)_{\infty}}
			&\displaystyle
			u^{-\alpha-\beta-1}\frac{\theta\big(uq^{-\alpha-2\beta-1}\big)\theta\big(q^{-\beta}\big)}{\theta\big(uq^{-\beta}\big)\theta\big(q^{-\alpha-2\beta-1}\big)}
		\end{pmatrix},
	\end{gather}
	and by means of easy calculations, we find that the matrices $W$ and $\tilde{W}$ are conjugate as follows:
	\begin{gather*}
		W(\alpha,\beta;u)=A(\alpha,\beta)^{-1}\tilde{W}(\alpha,\beta;u)A(\alpha,\beta)=B(\alpha,\beta)\tilde{W}(\alpha,\beta;u)B(\alpha,\beta)^{-1},
	\end{gather*}
	where
	\begin{gather*}
		A(\alpha,\beta)=
		\begin{pmatrix}
			1 & 0\\
			0 & f(\alpha,\beta)
		\end{pmatrix},\qquad
		B(\alpha,\beta)=
		\begin{pmatrix}
			f(\alpha,\beta) & 0\\
			0 & 1
		\end{pmatrix},\\
		f(\alpha,\beta)=\frac{\big(q^{\alpha+3\beta+2},q^{\alpha+\beta+1}\big)_{\infty}}{\big(q^{\alpha+2\beta+2},q^{\alpha+2\beta+1}\big)_{\infty}}.
	\end{gather*}
	In conclusion, our matrix $\tilde{W}$ is identified as Jimbo, Miwa and Okado's matrix $W'$ and with $n=M$, $\alpha'=\gamma_{k}-2-(M-3)\beta$ and $\beta'=-\beta$, our matrices $\tilde{S}_{r}(u)$ and Aomoto, Kato and Mimachi's matrices $P_{r}(u)$ are conjugate.
\end{Remark}

\section{Summary and discussion}\label{secsummary}
A summary of our results is as follows.
The main result of this paper is Theorem \ref{thm1}, which gives the connection matrices for fundamental solutions~$\bm{u}^{L,\sigma}$ of the $q$-difference system~$E_{N,M}$ (\ref{eqn1}) and (\ref{eqn2}).
The fundamental solution~$\bm{u}^{L,\sigma}$ converges in the region $\{|t_{\sigma(1)}|\ll\cdots\ll|t_{\sigma(L)}|\ll1\ll|t_{\sigma(L+1)}|\ll\cdots\ll|t_{\sigma(M)}|\}$.
The component of the fundamental solution $\bm{u}^{L,\mathrm{id}}$ has the asymptotic behavior of the form $t_{1}^{\delta_{1}}\cdots t_{M}^{\delta_{M}}(1+O(||x||))$ at $x=(t_{1}/t_{2},\dots,t_{L-1}/t_{L},t_{L},1/t_{L+1},t_{L+1}/t_{L+2},\dots,\allowbreak t_{M-1}/t_{M})=(0,\dots,0)$ for some $\delta$.
More precisely, see Remark~\ref{remarkexp}.
The way to get the connection matrices is to calculate ``easy'' connection matrices many times, i.e.,
\begin{gather*}
	\bm{u}^{L_{1},\sigma_{1}}\to\bm{u}^{L_{1}+1,\sigma_{1}}\to\cdots\to\bm{u}^{M,\sigma_{1}}\\
	\hphantom{\bm{u}^{L_{1},\sigma_{1}}}{} \to\bm{u}^{M,s_{r_{1}}\sigma_{1}}\to\bm{u}^{M,s_{r_{2}}s_{r_{1}}\sigma_{1}}\to\cdots\to\bm{u}^{M,s_{r_{I}}\cdots s_{r_{1}}\sigma_{1}}=\bm{u}^{M,\sigma_{2}}\\
\hphantom{\bm{u}^{L_{1},\sigma_{1}}}{} \to\bm{u}^{M-1,\sigma_{2}}\to\bm{u}^{M-2,\sigma_{2}}\to\cdots\to\bm{u}^{L_{2},\sigma_{2}}.
\end{gather*}
Here, $s_{r}=(r,r+1)\in \mathfrak{S}_{M}$.
Each step can be calculated by using Thomae--Watson's formula~\mbox{\cite{W,Wat}}, which is a connection formula of the function ${}_{N+1}\varphi_{N}$.
In addition, as an application of Theorem \ref{thm1}, we obtained a solution of the Yang--Baxter equation by considering the connection matrix between $\bm{u}^{M,(r,r+2)}$ and $\bm{u}^{M,\mathrm{id}}$.
Also we showed that our matrix~(\ref{WMATRIX1}) is identified as Jimbo, Miwa, Okado's matrix~(\ref{MATRIXJMO}), and our solution and Aomoto, Kato, Mimachi's solution~\cite{AKM} are conjugate.

There are many problems related to our results.
We mention four of them here.
\begin{itemize}\itemsep=0pt
	\item[(i)]By taking the limit $q\to1$ with $a_{j}=q^{\alpha_{j}}$, $b_{i}=q^{\beta_{i}}$, $c_{j}=q^{\gamma_{j}}$, we obtain fundamental solutions of Tsuda's hypergeometric equations \cite{T}
	\begin{gather*}
		\left\{t_{s}(\beta_{s}+\mathcal{D}_{s})\prod_{j=1}^{N}(\alpha_{j}+\mathcal{D})-\mathcal{D}_{s}\prod_{j=1}^{N}(\gamma_{j}-1+\mathcal{D})\right\}y=0,\qquad 1\leq s\leq M,\\
		\{t_{r}(\beta_{r}+\mathcal{D}_{r})\mathcal{D}_{s}-t_{s}(\beta_{s}+\mathcal{D}_{s})\mathcal{D}_{r}\}y=0,\qquad 1\leq r<s\leq M,
	\end{gather*}
	where $\mathcal{D}_{s}=t_{s}\frac{\partial}{\partial t_{s}}$ and $\mathcal{D}=\sum_{s=1}^{M}\mathcal{D}_{s}$.
	Similar to the method of Theorem \ref{thm1}, it is expected that the connection problem of Tsuda's equations will be solved.
	In this case, it must be noted that solutions are multivalued functions.
	The connection formula of Tsuda's hypergeometric function $F_{N+1,M}$, which is a solution of Tsuda's equations, depends on the path on $X=\big\{(t_{1},\dots,t_{M})\in\mathbb{C}^{M};\, t_{i}\neq t_{j},\, i\neq j,\, t_{i}\neq 0,1\big\}$.
	In \cite{Matsu}, a path of connection for solutions of GG system~\cite{GG} was discussed by Matsubara-Heo.
	Thus the connection problem of Tsuda's equations will also be solved by Matsubara's method.
	Note that in the case of $N=1$, some of connection formulas related with~$F_{1}$ were obtained by Olsson~\cite{O}, and related with~$F_{D}$ were obtained by Mimachi \cite{Mjap} from the viewpoint of the Jordan--Pochhammer integral and the intersection theory.
	\item[(ii)]The function is a generalization of the $q$-Lauricella function~$\varphi_{D}$.
	In differential case and two variable case, i.e.,
	\begin{gather*}
		[t_{s}(\beta_{s}+\mathcal{D}_{s})(\alpha+\mathcal{D}_{s}+\mathcal{D}_{r})-\mathcal{D}_{s}(\gamma-1+\mathcal{D}_{s}+\mathcal{D}_{r})]F=0,\qquad (i,j)=(1,2),\, (2,1),
	\end{gather*}
	there are power series solutions of the equation which converge near $(t_{1},t_{2})=(0,0)$, $(0,1)$, $(0,\infty)$, $(1,0)$, $(1,1)$, $(1,\infty)$, $(\infty,0)$, $(\infty,1)$, $(\infty,\infty)$.
	Here we show two solutions as examples:
	\begin{gather*}
		F_{1}\left(\begin{matrix}
			\alpha;\beta_{1},\beta_{2}\\
			\alpha+\beta_{1}+\beta_{2}+1-\gamma
		\end{matrix};1-t_{1},1-t_{2}\right),\\
		t_{1}^{-\beta_{1}}t_{2}^{-\beta_{2}}F_{1}\left(\begin{matrix}
			\beta_{1}+\beta_{2}+1-\gamma;\beta_{1},\beta_{2}\\
			\alpha+\beta_{1}+\beta_{2}+1-\gamma
		\end{matrix};\frac{t_{1}-1}{t_{1}},\frac{t_{2}-1}{t_{2}}\right).
	\end{gather*}
	60 solutions like these series were obtained by Appell \cite{App}, Le Vavasseur \cite{Va}, and other solutions which are given by the Horn's $G_{2}$ function were obtained by Erd\'{e}lyi~\cite{Er}.
	In this paper we get power series solutions for the system $E_{N,M}$, however in the case $N=1$ and $M=2$, the series are $q$-analogs of only a part of Appell, Le Vavasseur, Erd\'{e}lyi's solutions.
	In case of one variable, Hahn \cite{Ha} obtained $q$-analogs of Kummer's 24 solutions.
	In future we hope that other solutions for $E_{N,M}$, which contain $q$-analogs of Appell, Le Vavasseur, Erd\'{e}lyi's solutions will be obtained.
	It is expected that these solutions will be generalizations of Hahn's solutions.
	\item[(iii)]Our fundamental solutions $\bm{u}^{L,\sigma}$ are given by series.
	On the other hand, a solution $\mathcal{F}_{N,M}$ has the Euler-type Jackson integral representation (\ref{JACKSON}).
	It is also expected that other solutions of the $q$-difference system $E_{N,M}$ have the Euler-type Jackson integral representation with suitable domain of integration.
	\item[(iv)]In recent years, the theory of elliptic difference equations has progressed.
	For example, considering discrete isomonodromic deformations of a linear difference system, an elliptic Garnier system which is a generalization of elliptic Painlev\'{e} equation defined by Sakai~\cite{S} was obtained by Ormerod, Rains~\cite{OR} and Yamada~\cite{Y}.
	Also, by using representation theory of the elliptic quantum group $U_{q,p}\big(\widehat{\mathfrak{sl}}_{N}\big)$, an explicit formula for elliptic hypergeometric integral solutions of the face type elliptic $q$-KZ equation was obtained by Konno~\cite{K}.
	We hope that an elliptic analog of the hypergeometric function $\mathcal{F}_{N,M}$ and its related isomonodromic system will be obtained and our result will be extended to the elliptic hypergeometric function.
\end{itemize}

\section*{Acknowledgements}
The author would like to thank Professor Yasuhiko Yamada for useful discussions, valuable suggestions and his encouragement.
He also thanks Professor Saiei-Jaeyeong Matsubara-Heo for useful comments.
He is also grateful to Professor Wayne Rossman for careful reading and corrections in the manuscript.
In addition, he thanks the referees for reading carefully the original manuscript and giving valuable suggestions.

\pdfbookmark[1]{References}{ref}
\LastPageEnding


\begin{thebibliography}{99}
\footnotesize\itemsep=0pt

\bibitem{A}
Andrews G.E., Summations and transformations for basic {A}ppell series,
 \href{https://doi.org/10.1112/jlms/s2-4.4.618}{\textit{J.~London Math. Soc.}} \textbf{4} (1972), 618--622.

\bibitem{Ao1}
Aomoto K., A note on holonomic {$q$}-difference systems, in Algebraic Analysis,
 {V}ol.~{I}, Academic Press, Boston, MA, 1988, 25--28.

\bibitem{Ao2}
Aomoto K., A normal form of a holonomic {$q$}-difference system and its
 application to {$BC_1$} type, \textit{Int.~J. Pure Appl. Math.} \textbf{50}
 (2009), 85--95.

\bibitem{AKM}
Aomoto K., Kato Y., Mimachi K., A solution of the {Y}ang--{B}axter equation as
 connection coefficients of a~holonomic {$q$}-difference system, \href{https://doi.org/10.1155/S1073792892000023}{\textit{Int.
 Math. Res. Not.}} \textbf{1992} (1992), 7--15.

\bibitem{App}
Appell P., Sur les s\'eries hyperg\'eom\'etriques de deux variables et sur des
 \'equations diff\'erentielles lin\'eaires simultanees aux d\'eriv\'ees
 partielles, \textit{C.~R.~Acad. Soi. Paris} \textbf{90} (1880), 296--298.

\bibitem{Er}
Erd\'elyi A., Hypergeometric functions of two variables, \href{https://doi.org/10.1007/BF02392635}{\textit{Acta Math.}}
 \textbf{83} (1950), 131--164.

\bibitem{GR}
Gasper G., Rahman M., Basic hypergeometric series, 2nd ed., \textit{Encyclopedia of
 Mathematics and its Applications}, Vol.~96, \href{https://doi.org/10.1017/CBO9780511526251}{Cambridge University
 Press}, Cambridge, 2004.

\bibitem{GG}
Gelfand I.M., Graev M.I., G{G} functions and their relations to general
 hypergeometric functions, \href{https://doi.org/10.1023/A:1007653012080}{\textit{Lett. Math. Phys.}} \textbf{50} (1999),
 1--27.

\bibitem{GKZ}
Gelfand I.M., Zelevinsky A.V., Kapranov M.M., Hypergeometric functions and
 toric varieties, \href{https://doi.org/10.1007/BF01078777}{\textit{Funct. Anal. Appl.}} \textbf{23} (1989), 94--106.

\bibitem{Ha}
Hahn W., Beitr\"{a}ge zur {T}heorie der {H}eineschen {R}eihen. {D}ie {$24$}
 {I}ntegrale der {H}ypergeometrischen {$q$}-{D}ifferenzengleichung. {D}as
 {$q$}-{A}nalogon der {L}aplace-{T}ransformation, \href{https://doi.org/10.1002/mana.19490020604}{\textit{Math. Nachr.}}
 \textbf{2} (1949), 340--379.

\bibitem{J}
Jimbo M., Quantum groups and the {Y}ang--{B}axter equation (in Japanese),
 {M}aruzen shuppan, 1990.

\bibitem{JMO}
Jimbo M., Miwa T., Okado M., Solvable lattice models related to the vector
 representation of classical simple {L}ie algebras, \href{https://doi.org/10.1007/BF01229206}{\textit{Comm. Math. Phys.}}
 \textbf{116} (1988), 507--525.

\bibitem{K}
Konno H., Elliptic weight functions and elliptic {$q$}-{KZ} equation,
 \href{https://doi.org/10.1093/integr/xyx011}{\textit{J.~Integrable Syst.}} \textbf{2} (2017), xyx011, 43~pages,
 \href{https://arxiv.org/abs/1706.07630}{arXiv:1706.07630}.

\bibitem{Va}
Le~Vavasseur R., Sur le syst\`eme d'\'equations aux d\'eriv\'ees partielles
 simultan\'ees auxquelles satisfait la s\'erie hyperg\'eom\'etrique \`a deux
 variables {$F_1(\alpha, \beta, \beta',\gamma; x,y)$}, \textit{Ann.
 Fac. Sci. Toulouse Sci. Math. Sci. Phys.} \textbf{7} (1893), 121--205.

\bibitem{Matsu}
Matsubara-Heo S.-J., Global analysis of {GG} systems, \href{https://doi.org/10.1093/imrn/rnab144}{\textit{Int. Math. Res.
 Not.}} \textbf{2022} (2022), 14923--14963, \href{https://arxiv.org/abs/2010.03398}{arXiv:2010.03398}.

\bibitem{M}
Mimachi K., Connection problem in holonomic {$q$}-difference system associated
 with a {J}ackson integral of {J}ordan--{P}ochhammer type, \href{https://doi.org/10.1017/S0027763000001744}{\textit{Nagoya
 Math.~J.}} \textbf{116} (1989), 149--161.

\bibitem{Mjap}
Mimachi K., Connection formulas related with {A}ppell's {$F_{1}$} and {L}auricella's {$F_{D}$} functions,
in Proceedings of the Symposium on Representation Theory, \href{https://doi.org/10.34508/repsympo.2018.0_142}{J-STAGE}, Japan, 2018, 142--159.

\bibitem{O}
Olsson P.O.M., Integration of the partial differential equations for the
 hypergeometric functions {$F_{1}$} and {$F_{D}$} of two and more variables,
 \href{https://doi.org/10.1063/1.1704134}{\textit{J.~Math. Phys.}} \textbf{5} (1964), 420--430.

\bibitem{OR}
Ormerod C.M., Rains E.M., An elliptic {G}arnier system, \href{https://doi.org/10.1007/s00220-017-2934-6}{\textit{Comm. Math.
 Phys.}} \textbf{355} (2017), 741--766, \href{https://arxiv.org/abs/1607.07831}{arXiv:1607.07831}.

\bibitem{P1}
Park K., A certain generalization of {$q$}-hypergeometric functions and their
 related monodromy preserving deformation, \href{https://doi.org/10.1093/integr/xyy019}{\textit{J.~Integrable Syst.}}
 \textbf{3} (2018), xyy019, 14~pages, \href{https://arxiv.org/abs/1804.08921}{arXiv:1804.08921}.

\bibitem{P2}
Park K., A certain generalization of {$q$}-hypergeometric functions and their
 related monodromy preserving deformation~{II}, \href{https://arxiv.org/abs/2005.04992}{arXiv:2005.04992}.

\bibitem{S}
Sakai H., Rational surfaces associated with affine root systems and geometry of
 the {P}ainlev\'e equations, \href{https://doi.org/10.1007/s002200100446}{\textit{Comm. Math. Phys.}} \textbf{220} (2001),
 165--229.

\bibitem{W}
Thomae J., Les s\'eries Hein\'eennes sup\'erieures, ou les s\'eries de la
 forme, \textit{Ann. Mat. Pura Appl.} \textbf{4} (1870), 105--138.

\bibitem{T}
Tsuda T., Hypergeometric solution of a certain polynomial {H}amiltonian system
 of isomonodromy type, \href{https://doi.org/10.1093/qmath/haq040}{\textit{Q.~J.~Math.}} \textbf{63} (2012), 489--505,
 \href{https://arxiv.org/abs/1005.4130}{arXiv:1005.4130}.

\bibitem{Wat}
Watson G.N., The continuation of functions defined by generalized
 hypergeometric series, \textit{Trans. Camb. Phil. Soc.} \textbf{21} (1910),
 281--299.

\bibitem{Y}
Yamada Y., An elliptic {G}arnier system from interpolation, \href{https://doi.org/10.3842/SIGMA.2017.069}{\textit{SIGMA}}
 \textbf{13} (2017), 069, 8~pages, \href{https://arxiv.org/abs/1706.05155}{arXiv:1706.05155}.

\end{thebibliography}
\end{document}